\numberwithin{equation}{section}
\newcommand{\R}{\mathbb{R}}
\DeclareMathOperator{\esssup}{ess \ sup}
\newtheorem{thm}{Theorem}[section]
\newtheorem{lem}[thm]{Lemma}
\newtheorem{prop}[thm]{Proposition}
\theoremstyle{definition}
\newtheorem{rem}[thm]{Remark}
\theoremstyle{remark}
\newenvironment{sistema}%
{\left\lbrace\begin{array}{@{}l@{}}}%
{\end{array}\right.}
\patchcmd{\abstract}{\scshape\abstractname}{\textbf{\abstractname}}{}{}
\def\@makefnmark{} 
\title{\ \\ \vspace{0.4cm}
A nonlocal  anisotropic eigenvalue problem}
\begin{document}
\maketitle
\begin{center}
{\footnotesize GIANPAOLO PISCITELLI\\
Dipartimento di Matematica e Applicazioni \lq\lq R. Caccioppoli\rq\rq,\\
Universit\`a degli Studi di Napoli \lq\lq Federico II\rq\rq,\\
Complesso Universitario Montesantangelo, 
Via Cintia, 80126 Napoli, Italy}
\end{center}
\begin{abstract}
We determine the shape which minimizes, among domains with given measure, the first eigenvalue of the anisotropic laplacian perturbed by an integral of the unknown function. Using also some properties related to the associated \lq\lq twisted\rq\rq problem, we show that, this problem displays a \emph{saturation} phenomenon: the first eigenvalue increases with the weight up to a critical value and then remains constant.
\end{abstract}
\footnote{AMS Subject Classifications: 35P15, 49R50}

\section{Introduction}
\thispagestyle{fancy}
Let $\Omega$ be a bounded open set in $\R^n$, with $n\geq 2$, and $H_0^1(\Omega)$ be the closure of the space $C^\infty_0 (\Omega)$ with respect to the norm $||u||:=(\int_\Omega|\nabla u|^2+ u^2)^{1/2}$. We consider the following minimization problem
\begin{equation}
\label{infquo}
\lambda (\alpha, \Omega )=\inf_{u \in H_0^1(\Omega)} \mathscr{Q}_\alpha (u,\Omega)
\end{equation}
with
\begin{equation}
\label{rayquo}
\mathscr{Q}_\alpha (u,\Omega)=\frac{\int_\Omega (H(\nabla u))^2\ \text{d}x+\alpha (\int_\Omega u\ \text{d}x)^2}{\int_\Omega u^2\ \text{d}x}
\end{equation}
where $\alpha$ is a real parameter and $H$ is a suitable homogeneous convex function. The minimization problem (\ref{infquo}) leads to the following eigenvalue problem
\begin{equation}
\label{eigpro}
\begin{sistema}
-\text{div} (H(\nabla u) \nabla H(\nabla u))+\alpha\int_\Omega u\ \text{d}x=\lambda u \quad \text{in} \ \Omega ,\\
u=0 \quad\text{on} \ \partial\Omega.
\end{sistema}
\end{equation}
In the euclidean case, when $H(\xi)=|\xi|$, problems like the above ones arise, for example, in the study of reaction-diffusion equations describing chemical processes (see \cite{S}). More examples can be found in \cite{BFNT}, \cite{CL}, \cite{D}, \cite{FI} and \cite{PI}. 

The extension to a general $H(\xi)$ is considered here as it has been made in other contexts to take into account a possible anisotropy of the problem. Typical examples are anisotropic elliptic equations (\cite{AFLT}, \cite{BFK}), anisotropic eigenvalue problems (\cite{DGI}, \cite{DGII}), anisotropic motion by mean curvature (\cite{BN}, \cite{BP}).

We also observe that, when $\alpha\rightarrow +\infty$, problem (\ref{infquo}) becomes a twisted problem in the form (see \cite{FH} for the euclidean case)
\begin{equation}
\lambda^T(\Omega)=\inf_{u\in H_0^1(\Omega)}\left\{ \frac{\int_\Omega (H(\nabla u))^2\ \text{d}x}{\int_\Omega u^2\ \text{d}x},\ \int_\Omega u\ \text{d}x=0 \right\}. 
\end{equation}
We denote by $\mathcal{W}$ the so-called Wulff set centered in the origin, that is the set \mbox{$\{x\in\R^n : H^o (x)<1 \}$}, where $H^o$ is polar to $H$. As in \cite{FH}, we prove the following isoperimetric inequality
\begin{equation}
\lambda^T(\Omega)\geq\lambda^T(\mathcal{W}_1 \cup \mathcal{W}_2),
\end{equation}
where $\mathcal{W}_1$ and $\mathcal{W}_2$ are two disjoint Wulff set, each one with measure $|\Omega|/2$.

The principal objective of this paper consists in finding an optimal domain $\Omega$ which minimizes $\lambda (\alpha, \cdot)$ among all bounded open sets with a given measure. If we denote with $\kappa_n$ (we refer to Section \ref{prelim} for details) the measure of $\mathcal{W}$, in the local case ($\alpha=0$) we have a Faber-Krahn type inequality
\begin{equation}
\lambda (0,\Omega)\geq \lambda (0,\Omega^\#)=\frac{\kappa_n^{2/n}j_{n/2-1,1}}{|\Omega|^{2/n}},
\end{equation}
where $j_{\nu,1}$ is the first positive zero of $J_\nu (z)$, the ordinary Bessel function of order $\nu$, and $\Omega^\#$  is the Wulff set centered at the origin with the same measure of $\Omega$. Hence, when $\alpha$ vanishes, 
the optimal domain is a Wulff set. We show that the non local term affects the minimizer of problem (\ref{infquo}) in the sense that, up to a critical value of $\alpha$, the minimizer is again a Wulff set, but, if $\alpha$ is big enough, the minimizer becomes the union of two disjoint Wulff sets of equal radii. This is a consequence of the fact that the problem (\ref{infquo}) have an unusual rescaling with respect to the domain. Indeed, we have
\begin{equation}
\lambda(\alpha, t \Omega)=\frac{1}{t^2}\lambda(t^{n+2}\alpha, \Omega),
\end{equation}
which, for $\alpha=0$, becomes
\begin{equation}
\lambda(0, t \Omega)=\frac{1}{t^2}\lambda(0, \Omega),
\end{equation}
that is the rescaling in the local case. Therefore we show that we have a Faber-Krahn-type inequality only up to a critical value. Above this, we show a saturation phenomenon (see \cite{FII} for another example), that is the estimate cannot be improved and the optimal value remains constant. More precisely, we prove the following
\begin{thm}
\label{thmsat}
For every $n\geq 2$, there exists a positive value
\begin{equation}
\alpha_c=\frac{2^{3/n}\kappa_n^{2/n}j^3_{n/2-1,1}J_{n/2-1,1}(2^{1/n}j_{n/2-1,1})}{2^{1/n}j_{n/2-1,1}J_{n/2-1}(2^{1/n}j_{n/2-1,1})-nJ_{n/2}(2^{1/n}j_{n/2-1,1})}
\end{equation}
such that, for every bounded, open set $\Omega$ in $\R^n$  and for every real number $\alpha$, it holds
\begin{equation}
\lambda (\alpha, \Omega)\geq
\begin{sistema}
\lambda(\alpha, \Omega^\#) \quad\text{if} \ \alpha |\Omega|^{1+2/n}\leq \alpha_c,\\
\frac{2^{2/n}\kappa_n^{2/n}j^2_{n/2-1,1}}{|\Omega|^{2/n}}\quad\text{if} \ \alpha |\Omega|^{1+2/n}\geq \alpha_c.
\end{sistema}
\end{equation}
If equality sign holds when $\alpha |\Omega|^{1+2/n}<\alpha_c$ then $\Omega$ is a Wulff set, while if inequality sign holds when $\alpha|\Omega|^{1+2/n}>\alpha_c$ then $\Omega$ is the union of two disjoint Wulff sets of equal measure.
\end{thm}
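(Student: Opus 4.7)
The plan is to reduce, via an anisotropic rearrangement that treats the positive and negative parts of a minimizer separately, to the problem on a disjoint union of two Wulff sets, and then to analyze the resulting lower-dimensional problem explicitly through the radial ODE satisfied by the minimizer on each piece. The dimensionless scaling in the introduction lets me normalize $|\Omega|$ once and for all and regard $\alpha |\Omega|^{1+2/n}$ as the sole parameter.

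First I would take a minimizer $u\in H_0^1(\Omega)$ of $\mathscr{Q}_\alpha(\cdot,\Omega)$, split $u=u^+-u^-$, and set $V^\pm:=|\{\pm u>0\}|$. Rearranging $u^+$ to its anisotropic decreasing rearrangement $(u^+)^\#$ on a Wulff set $\mathcal{W}_+$ of measure $V^+$, and $u^-$ to $(u^-)^\#$ on a disjoint Wulff set $\mathcal{W}_-$ of measure $V^-$, I put
\[
v:=(u^+)^\#-(u^-)^\#\in H_0^1(\mathcal{W}_+\sqcup\mathcal{W}_-).
\]
Three properties are immediate: $\int v^2=\int u^2$, $\int v=\int u^+-\int u^-=\int u$, and the anisotropic Pólya-Szegő inequality gives $\int(H(\nabla v))^2\le\int(H(\nabla u))^2$. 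Hence
\[
\lambda(\alpha,\Omega)\ge\mathscr{Q}_\alpha(v,\mathcal{W}_+\sqcup\mathcal{W}_-)\ge\lambda(\alpha,\mathcal{W}_{V^+}\sqcup\mathcal{W}_{V^-}),
\]
with the case $V^-=0$ giving $\lambda(\alpha,\Omega^\#)$. This reduces the entire problem to analyzing $\lambda(\alpha,\mathcal{W}_{V_1}\sqcup\mathcal{W}_{V_2})$ over partitions $V_1+V_2=|\Omega|$.

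On the two-Wulff-set configuration, $H^o$-radial symmetry lets me assume the minimizer is radial on each component, and the Euler-Lagrange equation becomes the ODE $-(r^{n-1}w')'/r^{n-1}=\lambda w-\alpha m$, $w(R_i)=0$, with $m=\int u$ and $R_i$ the Wulff radius of $\mathcal{W}_{V_i}$. The general solution on each piece is an explicit combination of $J_{n/2-1}$ and the constant particular solution $\alpha m/\lambda$; imposing the Dirichlet condition on each $\mathcal{W}_{V_i}$ and the self-consistency $m=\sum_i\int w_i$ yields a transcendental system coupling the two pieces through $\lambda$ and $m$, whose smallest root defines $\lambda(\alpha,\mathcal{W}_{V_1}\sqcup\mathcal{W}_{V_2})$. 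Two endpoints of the partition are transparent: the limit $V_2\to 0$ recovers $\lambda(\alpha,\Omega^\#)$, while the symmetric choice $V_1=V_2=|\Omega|/2$ decouples around the antisymmetric mode $w_1-w_2$, which has zero mean and eigenvalue exactly $2^{2/n}\kappa_n^{2/n}j^2_{n/2-1,1}/|\Omega|^{2/n}$, independent of $\alpha$ (and equal to $\lambda^T(\mathcal{W}_1\cup\mathcal{W}_2)$).

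The final step is the one-variable optimization over partitions. The key lemma is that $V_1\mapsto\lambda(\alpha,\mathcal{W}_{V_1}\sqcup\mathcal{W}_{V-V_1})$ is symmetric about $V/2$ and attains its minimum either at the endpoints or at $V/2$. The transition occurs where the two infima coincide, giving
\[
\lambda(\alpha_c,\Omega^\#)=\frac{2^{2/n}\kappa_n^{2/n}j^2_{n/2-1,1}}{|\Omega|^{2/n}},
\]
and evaluating the Bessel transcendental equation for the radial minimizer on $\Omega^\#$ at this critical eigenvalue, at the scaled argument $2^{1/n}j_{n/2-1,1}$, produces the stated formula for $\alpha_c$ (the derivative term $J_{n/2}$ in the denominator coming from the Bessel recursion applied to $J_{n/2-1}'$). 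Equality cases are extracted from the rearrangement: equality in Pólya-Szegő forces each nodal component of $u$ to be a Wulff set, while the optimization in the partition yields the single-versus-two-equal-Wulff dichotomy announced by the theorem. The main obstacle is precisely the partition-monotonicity claim: because $\lambda$ is defined implicitly through a Bessel transcendental system, tracking the smallest root as $V_1$ varies demands careful monotonicity identities for $J_{n/2-1}$ of fractional order and a sign analysis of the characteristic function, and it is this same calculation that simultaneously pins down the explicit value of $\alpha_c$.
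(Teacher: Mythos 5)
Your overall architecture matches the paper's: reduce to a disjoint pair of Wulff sets by rearranging the positive and negative parts separately (this is exactly the paper's Proposition \ref{baldes}), note that the symmetric configuration carries the zero-average antisymmetric mode with eigenvalue $2^{2/n}\kappa_n^{2/n}j^2_{n/2-1,1}/|\Omega|^{2/n}$ independent of $\alpha$ (Proposition \ref{eigalp}(c)), and locate the transition where the single-Wulff value reaches this saturation level. But the core of the theorem is precisely the step you defer: the claim that, for fixed $\alpha$, the map $V_1\mapsto\lambda(\alpha,\mathcal{W}_{V_1}\sqcup\mathcal{W}_{V-V_1})$ attains its minimum only at an endpoint (subcritical) or at the equal split (supercritical). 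You state this as ``the key lemma'' and then acknowledge in your last sentence that establishing it would require a delicate sign and monotonicity analysis of the implicit Bessel characteristic system; nothing in the proposal actually carries that out, and tracking the smallest root of a two-parameter transcendental system as $V_1$ varies at fixed $\alpha$ is genuinely hard. As written, the dichotomy, the strictness statements, and the exact value of $\alpha_c$ all rest on an unproved assertion, so the proof is incomplete at its decisive point.

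The paper avoids this difficulty by inverting the roles of $\alpha$ and the eigenvalue. For a fixed level $\eta$ in the critical window $\bigl(\kappa_n^{2/n}j^2_{n/2-1,1}|\Omega|^{-2/n},\,2^{2/n}\kappa_n^{2/n}j^2_{n/2-1,1}|\Omega|^{-2/n}\bigr)$ and a fixed pair $(R_1,R_2)$, Proposition \ref{proalp} produces a \emph{unique} $\alpha_\eta$ with $\eta=\lambda(\alpha_\eta,\mathcal{W}_{R_1}\cup\mathcal{W}_{R_2})$, given by the explicit closed formula (\ref{alpmuu}); uniqueness needs Proposition \ref{anoalp}, and existence needs to know that $\lambda(\alpha,\cdot)$ eventually exceeds the saturation level, which is where the twisted-problem machinery (Theorem \ref{twithe} together with the isoperimetric inequality of Theorem \ref{isinth}) enters --- an ingredient absent from your outline. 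With the explicit function $\alpha_\eta(R_1,R_2)$ in hand, the partition optimization becomes maximizing a concrete Bessel expression under $\kappa_n(R_1^n+R_2^n)=|\Omega|$, and the paper settles it by invoking \cite[Prop.~3.4]{BFNT}, whose Euclidean computation transfers verbatim since only $\kappa_n$ replaces $\omega_n$; the supercritical regime then follows from monotonicity and continuity of $\lambda(\alpha,\Omega)$ in $\alpha$ (Proposition \ref{eigalp}(a)) rather than from any partition analysis at large $\alpha$. If you want to complete your route, either carry out the monotonicity analysis you postpone, or adopt this inversion: fix $\eta$, solve for $\alpha$ explicitly, and reduce to the known one-variable Bessel maximization.
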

In Figure 1 we illustrate the transition between the two minimizers.
\begin{center}
\begin{picture}(300,200)
\put(52,173){\footnotesize $\lambda$}
\put(12,105){\footnotesize $2^{2/n} j^2_{\frac{n}{2}-1,1}$}
\put(30,77){\footnotesize $j^2_{\frac{n}{2}-1,1}$}
\put(50,35){\footnotesize O}
\put(120,35){\footnotesize $\alpha_c / \kappa_n^{1+2/n}$}
\put(240,35){\footnotesize $\alpha$}
\put(60,45){\includegraphics[width=0.5\textwidth]{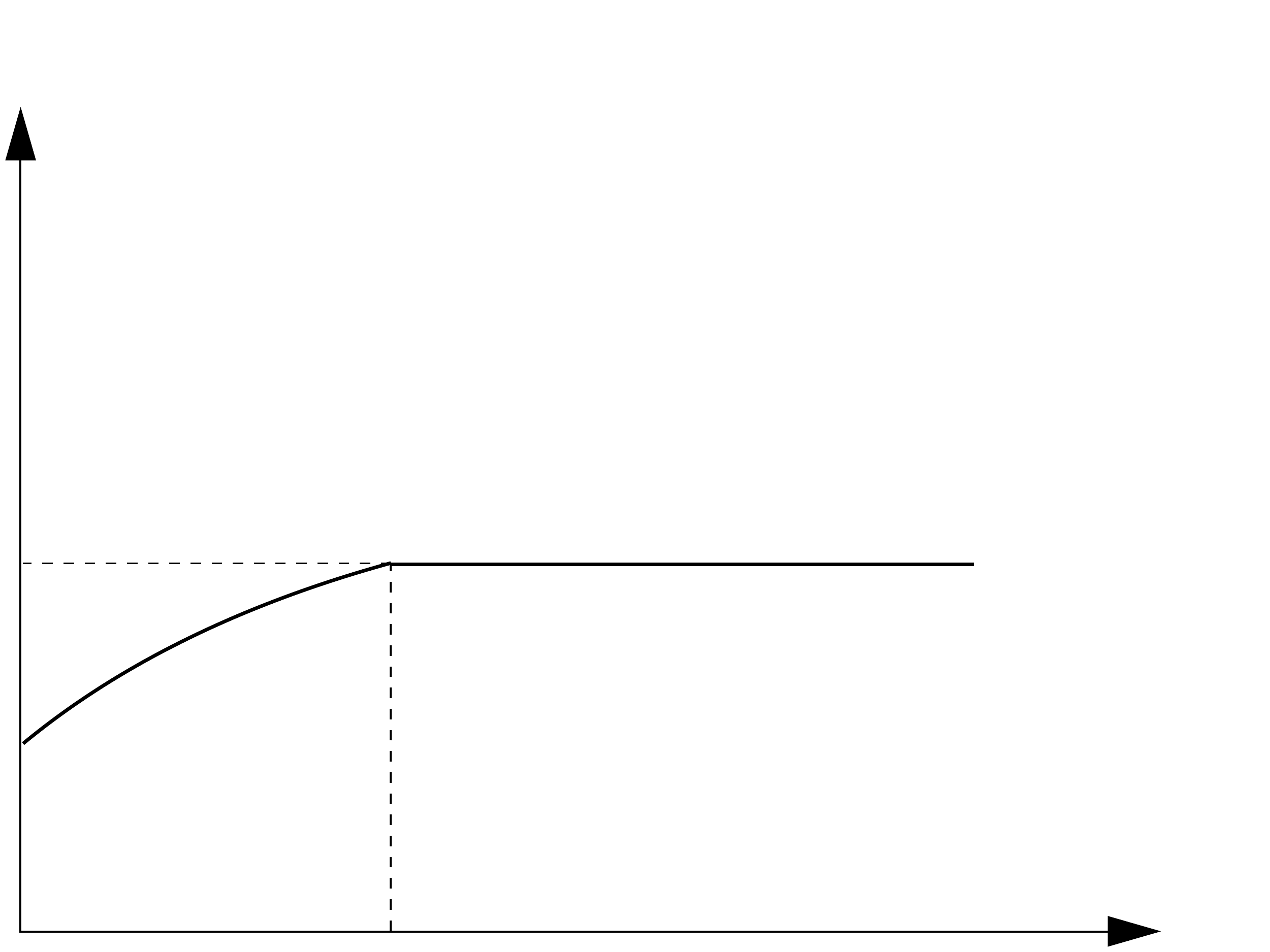}}
\put(-50,15){\footnotesize The continuous line represents the minimum of $\lambda (\alpha, \Omega)$, among the open bounded sets of }
\put(-50,5){\footnotesize measure $\kappa_n$, as a function of $\alpha$.}
\end{picture}
 \end{center}


The outline of the paper follows. In Section \ref{prelim} we recall some useful properties of gauge functions, rearrrangements and anisotropic Laplacian. In Section \ref{secpre} we show some properties of the first eigenvalue of (\ref{infquo}) and in Section \ref{sectwi} we investigate the first twisted Dirichlet eigenvalue. In Section \ref{secpro} we give the proof of the main theorem.

\section{Preliminaries}\label{prelim}

\subsection{Gauge functions}
\label{susega}
Let $H : \mathbb{R}^n \to [0,\infty[$ be a $C^1(\R^n\backslash\{ 0\})$ convex function satisfying the homogeneity property:
\begin{equation}
\label{hompro}
H(t\xi)=|t|H(\xi), \quad \forall \xi \in \mathbb{R}^n, \quad \forall t \in \mathbb{R},
\end{equation}
and such that any level set $\{ \xi\in\R^n : H(\xi)\leq t \}$ is strictly convex, for every $t>0$. Furthermore, assume that $H$ satisfies
\begin{equation}
\label{betabs}
\alpha | \xi |\leq H(\xi)\leq\beta|\xi|, \quad \forall \xi \in \mathbb{R}^n,
\end{equation}
for some positive constants $\alpha \leq \beta$. We also assume that
\begin{equation}
K=\{x \in \mathbb{R}^n : H(x)< 1\}
\end{equation}
has measure $|K|$ equal to the measure $\omega_n$ of the unit ball in $\mathbb{R}^n$. Because of ($\ref{hompro}$), this assumption is not restrictive and sometimes we will say that $H$ is the gauge of $K$.
We define the polar function $H^o: \R^n\rightarrow [0,+\infty[$ of $H$ as
\begin{equation}
H^o(x)= \sup_{\xi\in K}\frac{ \langle x, \xi \rangle}{H(\xi)}.
\end{equation}
It is easy to verify that also $H^o$ is a convex function which satisfies properties (\ref{hompro}) and (\ref{betabs}).
The set
\begin{equation}
\mathcal{W}= \{x\in \mathbb{R}^n: H^o(x)< 1\}
\end{equation}
is the so-called Wulff set centered at the origin and we indicate the Lebesgue measure of $\mathcal{W}$ by $\kappa_n$. More generally, we denote by $\mathcal{W}_r(x_0)$ the set $r\mathcal{W}+x_0$, that is the Wulff set centered in $x_0$ of radius $r$ and, throughout this paper, we put $\mathcal{W}_r:=\mathcal{W}_r(x_0)$ if no misunderstanding occurs. Let us observe that Lebesgue measure of $\mathcal{W}_r$ is $\kappa_nr^n$. Furthermore, we denote by $\Omega^\#$ the Wulff set centered at the origin such that $|\Omega^\#|=|\Omega|$.

The definitions of $H$ and $H^o$ give (see e.g. \cite{BP})
\begin{equation}
\label{anicom}
\begin{split}
H(\nabla H^o(x))&=1 \quad\text{and}\quad H^o(\nabla H(x))=1\quad\forall x \in\R^n;\\
H^o(x)\nabla H(\nabla H^o(x))&=x\quad\text{and}\quad H(x)\nabla H^o(\nabla H(x))= x \quad\forall x \in\R^n\backslash \{0\}.
\end{split}
\end{equation}
It is possible to give the following 
\lq\lq generalized\rq\rq definition of perimeter of a set $E$ with respect to $H$:
\begin{equation}
P_H(E; \Omega)=\int_\Omega |\nabla \chi_E|_H\ \text{d}x=\sup\left\{\int_\Omega  \text{div}   \varphi\ \text{d}x : \varphi\in C_0^1(\Omega; \mathbb{R}^n), H^o(\varphi)\leq 1  \right\}.
\end{equation}
This definition yields 
the \lq\lq generalized\rq\rq\ isoperimetric inequality (see \cite{AFLT}):
\begin{equation}
\label{isoine}
P_H(E; \mathbb{R}^n )\geq n \kappa_n^{1/n}|E|^{1-\frac{1}{n}}.
\end{equation}
We denote the generalized perimeter of $\mathcal{W}$ by $\gamma_n$ and hence we have that $\gamma_n=n\kappa_n$.

\subsection{Rearrangements}
Let $\Omega$ be a measurable and not negligible subset of $n$-dimensional euclidean space $\R^n$, let $u$ be a measurable map from $\Omega$ into $\R$.\\
We define the \emph{distribution function} of $u$ as the map $\mu$ from $[0,\infty[$ to $[0, \infty[$  such that
\begin{equation}
\mu (t) = |\{x \in \Omega : |u(x)|>t \}|
\end{equation}
and the \emph{decreasing rearrangement} of $u$, denoted by $u^*$, as the  map from $[0,\infty[$ to $[0, \infty[$  such that
\begin{equation}
u^*(s) :=\sup\{t>0 : \mu(t)>s \}.
\end{equation} 
For the properties of decreasing rearrangement we refer, for example, to \cite{K}. We define the (\emph{decreasing}) \emph{convex rearrangement} of $u$ (see \cite{AFLT}), denoted by $u^\#$, as the map such that
\begin{equation}
\label{conrea}
u^\#(x)=u^*(\kappa_n(H^o(x))^n).
\end{equation}
By definition it holds
\begin{equation}
||u||_{L^p(\Omega)}=||u^\#||_{L^p(\Omega^\#)},\quad \text{for} \ \ 1\leq p \leq +\infty.
\end{equation}
Furthermore, when $u$ coincides with its convex rearrangement, we have (see \cite{AFLT})
\begin{align}
\label{connau}
&\nabla u^\# (x)=u^{*'}(\kappa_n(H^o(x))^n)n\kappa_n (H^o(x))^{n-1}\nabla H^o(x);\\
\label{conhnu}
&H(\nabla u^\# (x))=-u^{*'}(\kappa_n(H^o(x))^n)n\kappa_n (H^o(x))^{n-1};\\
\label{connhn}
&\nabla H(\nabla u^\# (x))=\frac{x}{H^o(x)}.
\end{align}
Now, we recall here a result about a P\'olya-Szeg\"o principle related to $H$ (we refer to \cite{AFLT}, \cite{BZ}) in the equality case (see \cite{ET}, \cite{FV} for further details).
\begin{thm}
\label{psieth}
Let $u\in H_0^1(\Omega)$, then
\begin{equation}
\label{poszin}
\int_\Omega (H(\nabla u))^2\ \text{d}x\geq\int_{\Omega^\#} (H(\nabla u^\#))^2\ \text{d}s.
\end{equation}
Furthermore, if $u$ satisfies the equality in {\rm (\ref{poszin})}, then, for a.e. $t \in [0, \esssup u]$, the set $\{x \in\Omega : u(x)>t \}$ is equivalent to a Wulff set.
\end{thm}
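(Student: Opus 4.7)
The plan is to establish (\ref{poszin}) through a coarea--isoperimetric chain and to derive the equality case from the rigidity of the anisotropic isoperimetric inequality. Without loss of generality assume $u \geq 0$, since (\ref{hompro}) gives $H(\nabla |u|) = H(\nabla u)$ a.e.\ and $|u|^\# = u^\#$. Let $\mu(t) = |\{u > t\}|$ be the distribution function. Applying the coarea formula to $f = (H(\nabla u))^2/|\nabla u|$ yields
\begin{equation*}
\int_\Omega (H(\nabla u))^2\,dx = \int_0^{\esssup u} \int_{\{u=t\}} \frac{(H(\nabla u))^2}{|\nabla u|}\,d\mathcal{H}^{n-1}\,dt.
\end{equation*}

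On a.e.\ level set I would apply the Cauchy--Schwarz inequality to the pair $\bigl(H(\nabla u)/\sqrt{|\nabla u|},\, 1/\sqrt{|\nabla u|}\bigr)$, obtaining
\begin{equation*}
\int_{\{u=t\}} \frac{(H(\nabla u))^2}{|\nabla u|}\,d\mathcal{H}^{n-1} \;\geq\; \frac{\bigl(\int_{\{u=t\}} H(\nabla u/|\nabla u|)\,d\mathcal{H}^{n-1}\bigr)^2}{\int_{\{u=t\}} |\nabla u|^{-1}\,d\mathcal{H}^{n-1}} = \frac{P_H(\{u>t\})^2}{-\mu'(t)},
\end{equation*}
where I used the $1$-homogeneity of $H$, the definition of anisotropic perimeter (with unit normal $\nabla u/|\nabla u|$), and the a.e.\ identity $-\mu'(t) = \int_{\{u=t\}} |\nabla u|^{-1}\,d\mathcal{H}^{n-1}$. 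The anisotropic isoperimetric inequality (\ref{isoine}) then gives $P_H(\{u>t\})^2 \geq n^2\kappa_n^{2/n}\mu(t)^{2-2/n}$, whence
\begin{equation*}
\int_\Omega (H(\nabla u))^2\,dx \;\geq\; n^2\kappa_n^{2/n} \int_0^{\esssup u} \frac{\mu(t)^{2-2/n}}{-\mu'(t)}\,dt.
\end{equation*}

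To match this lower bound with the right-hand side of (\ref{poszin}), I would compute $\int_{\Omega^\#}(H(\nabla u^\#))^2\,dx$ directly. By (\ref{conhnu}) the integrand equals $n^2\kappa_n^2\,(u^{*\prime}(\kappa_n(H^o(x))^n))^2\,(H^o(x))^{2n-2}$, a function of $H^o(x)$ alone. Using the anisotropic polar coordinates supplied by $|\{H^o<r\}|=\kappa_n r^n$ (which give $\int_{\mathcal{W}_R}g(H^o(x))\,dx = n\kappa_n\int_0^R g(r)r^{n-1}\,dr$), followed by the substitutions $s=\kappa_n r^n$ and then $s=\mu(t)$ (which converts $u^{*\prime}(s)$ into $1/\mu'(t)$), one obtains exactly $n^2\kappa_n^{2/n}\int_0^{\esssup u}\mu(t)^{2-2/n}/(-\mu'(t))\,dt$, so (\ref{poszin}) follows. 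For the equality case, if equality holds in (\ref{poszin}), then in particular equality must hold in (\ref{isoine}) for a.e.\ $t$; the known rigidity of the anisotropic isoperimetric inequality (see \cite{FV}) forces $\{u>t\}$ to be equivalent to a Wulff set for a.e.\ $t \in [0, \esssup u]$.

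The main obstacle I anticipate is the careful justification of the coarea manipulations on the set of critical values: both the formula $-\mu'(t) = \int_{\{u=t\}} |\nabla u|^{-1}\,d\mathcal{H}^{n-1}$ and the coarea representation of the Dirichlet-type integral hold only at regular values of $u$, so one needs a Sard-type argument together with truncation and approximation in $H_0^1(\Omega)$ to pass from smooth $u$ to general $u \in H_0^1(\Omega)$, and to verify that the contribution of $\{\nabla u = 0\}$ to both sides is null. A secondary delicate point is that invoking the rigidity of (\ref{isoine}) in the equality case rests on the non-trivial characterization quoted from \cite{FV}.
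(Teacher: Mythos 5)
The paper itself does not prove Theorem \ref{psieth} --- it recalls the inequality from \cite{AFLT}, \cite{BZ} and the equality case from \cite{ET}, \cite{FV} --- and your coarea/Cauchy--Schwarz/anisotropic-isoperimetric chain, matched against the explicit computation of $\int_{\Omega^\#}(H(\nabla u^\#))^2\,dx$ via (\ref{conhnu}), is precisely the standard Talenti-type argument used in those references, so your approach is essentially the same. The two identities you state, $-\mu'(t)=\int_{\{u=t\}}|\nabla u|^{-1}\,d\mathcal{H}^{n-1}$ and the exactness of the substitution $s=\mu(t)$, hold in general only as inequalities in the favorable direction, but this is exactly the technical caveat you flag, and you correctly extract only the stated conclusion (a.e.\ level sets are Wulff sets) rather than the stronger $u=u^\#$, which would fail without extra assumptions on $\{\nabla u^\#=0\}$.
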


\subsection{Properties of local problem}
Now we recall some known results about the anisotropic local ($\alpha = 0$) eigenvalue problem.
\begin{thm}
\label{fabthm}
Let $\Omega$  be an open bounded set, then
\begin{equation}
\label{fabkra}
\lambda (0,\Omega)\geq \lambda (0,\Omega^\#)=\frac{\kappa_n^{2/n}j_{n/2-1,1}}{|\Omega|^{2/n}}.
\end{equation}
\end{thm}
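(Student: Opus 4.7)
The plan is to execute the standard two-step Faber--Krahn strategy, adapted to the anisotropic setting.

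\emph{Step 1 (symmetrization).} Pick any $u\in H_0^1(\Omega)$ and pass to its convex rearrangement $u^\#\in H_0^1(\Omega^\#)$. Equimeasurability yields $\|u\|_{L^2(\Omega)}=\|u^\#\|_{L^2(\Omega^\#)}$, while the anisotropic P\'olya--Szeg\"o principle (Theorem \ref{psieth}) gives
\begin{equation*}
\int_\Omega (H(\nabla u))^2\,\text{d}x \;\geq\; \int_{\Omega^\#}(H(\nabla u^\#))^2\,\text{d}x.
\end{equation*}
Dividing these two comparisons, $\mathscr{Q}_0(u,\Omega)\geq \mathscr{Q}_0(u^\#,\Omega^\#)\geq \lambda(0,\Omega^\#)$, and taking the infimum over $u$ gives $\lambda(0,\Omega)\geq \lambda(0,\Omega^\#)$.

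\emph{Step 2 (computation on the Wulff set).} Write $\Omega^\#=\mathcal{W}_R$ with $\kappa_n R^n=|\Omega|$. Applying Step 1 in the case $\Omega=\Omega^\#$ shows that the infimum is attained by a function equal to its own convex rearrangement, hence, by (\ref{conrea}), of the form $u(x)=v(\rho)$ with $\rho:=H^o(x)$, $v(R)=0$ and $v'\leq 0$. Using $H(\nabla H^o)=1$ and $H^o(x)\nabla H(\nabla H^o(x))=x$ from (\ref{anicom}), together with the $0$-homogeneity of $\nabla H$, one computes
\begin{equation*}
H(\nabla u)(x)=-v'(\rho),\qquad H(\nabla u)\,\nabla H(\nabla u)(x)=\frac{v'(\rho)}{\rho}\,x,
\end{equation*}
and then, by Euler's identity $\nabla H^o(x)\cdot x=\rho$,
\begin{equation*}
-\text{div}\bigl(H(\nabla u)\nabla H(\nabla u)\bigr)=-v''(\rho)-\frac{n-1}{\rho}\,v'(\rho).
\end{equation*}
Thus (\ref{eigpro}) with $\alpha=0$ collapses to the classical radial Bessel ODE on $(0,R)$ with Dirichlet condition at $\rho=R$ and regularity at $\rho=0$; its first eigenvalue equals $j_{n/2-1,1}^2/R^2$, realized by $v(\rho)=\rho^{1-n/2}J_{n/2-1}(j_{n/2-1,1}\rho/R)$. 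Substituting $R=(|\Omega|/\kappa_n)^{1/n}$ yields the claimed value.

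The one non-routine point is justifying the radial-in-$H^o$ ansatz in the absence of rotational symmetry. This is handled cleanly by the equality statement of Theorem \ref{psieth}: any minimizer on $\mathcal{W}_R$ must have all superlevel sets equivalent to Wulff sets, and the Dirichlet condition forces those sets to be concentric with $\mathcal{W}_R$, so $u$ is necessarily a decreasing function of $H^o(x)$ alone. Once this reduction is secured, the remainder of the argument is a direct ODE computation.
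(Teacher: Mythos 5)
Your argument is correct and essentially the paper's own: the paper delegates Step 1 to the anisotropic Faber--Krahn theorem of Belloni--Ferone--Kawohl (i.e.\ the P\'olya--Szeg\"o principle of Theorem \ref{psieth} plus equimeasurability) and computes $\lambda(0,\Omega^\#)$ by exactly your radial reduction $u=v(H^o(x))$ to the Bessel ODE (\ref{u*kpro}), so your Step 2 merely fills in that computation (and your value $\kappa_n^{2/n}j_{n/2-1,1}^{2}/|\Omega|^{2/n}$, with the square, is the intended one --- the exponent is a typo in (\ref{fabkra})). Note only that your closing paragraph is superfluous and its concentricity claim is not really justified as stated: the observation at the start of your Step 2, that the convex rearrangement of a minimizer on $\mathcal{W}_R$ is again a minimizer, already furnishes a minimizer of the form $v(H^o(x))$ with $v$ decreasing, which is all the ODE computation requires, so no appeal to the equality case of Theorem \ref{psieth} is needed.
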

The details of the proof can be found in \cite[Th. 3.3]{BFK}. The computation of the first eigenvalue on $\Omega^\#$ comes from the fact that the first eigenfunction $u (x) = u^\# (x)$ in $\mathcal{W}_R$ satisfies (see also \cite{PII}):
\begin{equation}
\label{u*kpro}
\begin{sistema}
\frac{d^2}{d\rho^2} u^*(\kappa_n\rho^n)+ \frac{n-1}{\rho} \frac{d}{d\rho} u^*(\kappa_n\rho^n)+\lambda u^*(\kappa_n\rho^n)=0 \quad \text{in} \ \mathcal{W}_R \\
u^*(\kappa_n \rho^n)=0 \quad\text{on} \ \partial\mathcal{W}_R,
\end{sistema}
\end{equation}
where $\rho=H^o(x)$ and $R$ is the radius of the set $\Omega^\#$, which is a Wulff set.

As a consequence of these and other related Theorems, we have:
\begin{prop}
\label{tworem}
Let $\Omega$ be the union of two disjoint Wulff sets of radii $R_1, R_2\geq 0$.
\begin{itemize}
\item[(a)] If $R_1<R_2$, then  the first eigenvalue  $\lambda(0,\Omega)$ coincides with the first eigenvalue on the larger Wulff set. Hence, any associated eigenfunction is simple and identically zero on the smaller set and it does not change sign on the larger one.
\item[(b)] If $\Omega$ is the union of two disjoint Wulff sets of equal radii, then the first eigenvalue $\lambda (0,\Omega)$ is $\frac{2^{2/n}\kappa_n^{2/n}j^2_{n/2-1,1}}{|\Omega|^{2/n}}$. It is not simple and there exists an associated eigenfunction with zero average.
\end{itemize}
\end{prop}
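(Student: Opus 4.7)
The plan is to exploit the disjointness of the two components. Since $\Omega=\mathcal{W}_{R_1}\sqcup\mathcal{W}_{R_2}$, every $u\in H_0^1(\Omega)$ splits as $u=u_1+u_2$ with $u_i\in H_0^1(\mathcal{W}_{R_i})$ of disjoint support, and both the numerator and denominator of $\mathscr{Q}_0$ decompose additively:
\begin{equation*}
\int_\Omega (H(\nabla u))^2\,\mathrm{d}x=\sum_{i=1}^{2}\int_{\mathcal{W}_{R_i}}(H(\nabla u_i))^2\,\mathrm{d}x,\qquad \int_\Omega u^2\,\mathrm{d}x=\sum_{i=1}^{2}\int_{\mathcal{W}_{R_i}}u_i^2\,\mathrm{d}x.
\end{equation*}
Hence the spectrum of (\ref{eigpro}) with $\alpha=0$ on $\Omega$ is the union (with multiplicities) of the Dirichlet spectra on the two Wulff sets, and each eigenfunction on $\Omega$ is a linear combination of eigenfunctions on the components corresponding to the same eigenvalue.

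For part (a), I would apply Theorem \ref{fabthm} to each Wulff set (which coincides with its own convex rearrangement), yielding $\lambda(0,\mathcal{W}_{R_i})=j^2_{n/2-1,1}/R_i^2$ via (\ref{u*kpro}). Since $R_1<R_2$, the first eigenvalue on the smaller component is strictly larger, so the minimum of the spectrum of $\Omega$ is $\lambda(0,\mathcal{W}_{R_2})$ and is strictly below every eigenvalue arising from $\mathcal{W}_{R_1}$. The radial ODE (\ref{u*kpro}) identifies the first eigenfunction on $\mathcal{W}_{R_2}$ with the anisotropic Bessel profile, which is simple and of constant sign inside $\mathcal{W}_{R_2}$. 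Any associated eigenfunction on $\Omega$ is therefore, up to a scalar, the trivial extension of this profile by zero on $\mathcal{W}_{R_1}$, so the eigenvalue is simple and the claimed sign/support properties follow immediately.

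For part (b), with $R_1=R_2=R$ both components share the same first eigenvalue $j^2_{n/2-1,1}/R^2$; writing $R$ in terms of $|\Omega|=2\kappa_n R^n$ produces the stated value $2^{2/n}\kappa_n^{2/n}j^2_{n/2-1,1}/|\Omega|^{2/n}$. The corresponding eigenspace is now two-dimensional, spanned by the two extensions $u_1,u_2$ of the first eigenfunction on each component, so the eigenvalue is not simple. Because $u_1$ and $u_2$ are translates of one and the same Wulff profile, they have equal integrals over $\Omega$, and therefore $u_1-u_2$ is a first eigenfunction with zero mean. The only delicate point I foresee is the simplicity of the first eigenvalue on a single Wulff set (needed in part (a) to conclude the eigenspace is one-dimensional); this I would invoke from the standard theory of anisotropic Dirichlet eigenvalue problems in \cite{BFK}, combined with the strict positivity of the Bessel-type radial profile inside $\mathcal{W}_{R_2}$.
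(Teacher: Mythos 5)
Your proof is correct and coincides with the argument the paper implicitly relies on: Proposition \ref{tworem} is stated there without proof, as a consequence of Theorem \ref{fabthm} and the radial characterization (\ref{u*kpro}), and your decomposition of $H_0^1(\Omega)$ over the two disjoint components, the computation $\lambda(0,\mathcal{W}_R)=j_{n/2-1,1}^2/R^2$, and the appeal to \cite{BFK} for simplicity of the first eigenvalue on a single Wulff set are exactly the standard ingredients being invoked. Your observation that equal-radius Wulff sets are translates of one another (so the two component eigenfunctions have equal integrals, making $u_1-u_2$ a zero-average eigenfunction) correctly settles part (b) as well.
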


\section{The first eigenvalue of the nonlocal problem}
\label{secpre}
In this Section we collect some properties of problem (\ref{infquo}), which will be fundamental in the proof of the main theorem.
\begin{prop}
Let $\Omega$ be an open bounded set, then the problem \text{\rm (\ref{infquo})} admits a solution $\forall\alpha\in\R$.
\end{prop}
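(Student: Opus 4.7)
The plan is to apply the direct method of the calculus of variations to the Rayleigh quotient $\mathscr{Q}_\alpha$. Since $\mathscr{Q}_\alpha(\cdot,\Omega)$ is $0$-homogeneous in $u$, I can restrict the minimization to the set of $u \in H_0^1(\Omega)$ with $\int_\Omega u^2\ \text{d}x = 1$. The first task is to verify that $\lambda(\alpha,\Omega) > -\infty$, which is nontrivial only when $\alpha < 0$. For a normalized $u$, the Cauchy--Schwarz inequality gives $(\int_\Omega u\ \text{d}x)^2 \leq |\Omega|$, so the numerator of $\mathscr{Q}_\alpha(u,\Omega)$ is bounded below by $\min\{0,\alpha|\Omega|\}$. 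In particular $\lambda(\alpha,\Omega)$ is finite.

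Next, I pick a minimizing sequence $(u_n) \subset H_0^1(\Omega)$ with $\int_\Omega u_n^2\ \text{d}x = 1$ and $\mathscr{Q}_\alpha(u_n,\Omega) \to \lambda(\alpha,\Omega)$. The estimate above shows that $\int_\Omega (H(\nabla u_n))^2\ \text{d}x$ remains bounded; the lower inequality in \eqref{betabs} then yields a uniform bound on $\int_\Omega |\nabla u_n|^2\ \text{d}x$, so $(u_n)$ is bounded in $H_0^1(\Omega)$.

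By reflexivity of $H_0^1(\Omega)$ and the Rellich--Kondrachov theorem, a subsequence satisfies $u_n \rightharpoonup u$ weakly in $H_0^1(\Omega)$ and $u_n \to u$ strongly in $L^2(\Omega)$. The $L^2$ convergence forces $\int_\Omega u^2\ \text{d}x = 1$ and $\int_\Omega u_n\ \text{d}x \to \int_\Omega u\ \text{d}x$. Since $\xi \mapsto (H(\xi))^2$ is convex and continuous, the functional $v \mapsto \int_\Omega (H(\nabla v))^2\ \text{d}x$ is weakly lower semicontinuous on $H_0^1(\Omega)$, and combining these facts yields
\begin{equation*}
\mathscr{Q}_\alpha(u,\Omega) \leq \liminf_{n \to \infty} \mathscr{Q}_\alpha(u_n,\Omega) = \lambda(\alpha,\Omega),
\end{equation*}
so $u$ realizes the infimum.

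The only delicate point in the argument is the coercivity of a minimizing sequence when $\alpha$ is very negative: the non-local term carries the ``wrong'' sign, but Cauchy--Schwarz shows it is controlled by $|\Omega|\int_\Omega u^2\ \text{d}x$, which prevents the anisotropic Dirichlet energy from blowing up along the sequence.
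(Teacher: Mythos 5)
Your proof is correct and follows essentially the same route as the paper: normalization by homogeneity, a minimizing sequence, Rellich--Kondrachov compactness, and weak lower semicontinuity of the convex anisotropic Dirichlet energy. You merely make explicit the coercivity estimate for $\alpha<0$ via Cauchy--Schwarz, which the paper leaves implicit.
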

\begin{proof}
The direct methods in the Calculus of Variation provide an existence proof for a minimizer of (\ref{infquo}). In a bounded domain $\Omega$, the existence of a first eigenfunction (and of the first eigenvalue) is established via a minimizing sequence $u_k$ for the Raylegh quotient. By homogeneity, it is possible the normalization and, using the Rellich-Kondrachov imbedding theorem \cite[Th. IX.16]{B}, we find a minimizer by the lower semicontinuity \cite[Th. 4.5]{G} of the functional.
\end{proof}

\begin{rem}
Let us note that if $u\in H_0^1(\Omega)$ is a minimizer of problem (\ref{infquo}), then it satisfies the associated Euler-Lagrange equation, that we can write as $L_\alpha u =\lambda u$, where
\begin{equation}
\label{operat}
L_\alpha u := -\text{\rm div}(H(\nabla u )\nabla H (\nabla u))+\alpha \int_\Omega u\ \text{d}x.
\end{equation}
\end{rem}

\begin{prop}
\label{anoalp}
Let $\Omega$ be a bounded open set which is union of two disjoint Wulff sets $\mathcal{W}_{R_1}(x_1)$ and $\mathcal{W}_{R_2}(x_2)$, with $R_1$, $R_2\geq 0$, and let $L_\alpha$ be the operator as in {\rm (\ref{operat})}. Then:
\begin{itemize}
\item[(a)] if a real number $\lambda$ is an eigenvalue of $L_\alpha u =\lambda u$ for some nonzero $\alpha$, either there exists no other real value of $\alpha$ for which $\lambda$ is an eigenvalue of $L_\alpha$ or $\lambda$ is an eigenvalue of the local problem \text{\rm ($\alpha=0$)}; in the last case $\lambda$ is an eigenvalue of $L_\alpha$ for all real $\alpha$. 
\item[(b)] $\lambda$ is an eigenvalue of $L_\alpha u =\lambda u$ for all $\alpha$ if and only if it is an eigenvalue of the local problem having an eigenfunction with zero average in $\Omega$.
\end{itemize}
\end{prop}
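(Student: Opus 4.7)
The idea is to exploit the decomposition of $\Omega$ into two Wulff components together with the global coupling provided by the nonlocal term. Setting $c:=\int_\Omega u\,dx$, the eigenvalue equation $L_\alpha u = \lambda u$ reads
\begin{equation*}
-\text{div}\bigl(H(\nabla u)\nabla H(\nabla u)\bigr)=\lambda u - \alpha c\quad\text{in each }\mathcal{W}_{R_i}(x_i),\qquad u=0\text{ on }\partial \mathcal{W}_{R_i},
\end{equation*}
which on each component is an anisotropic Helmholtz equation with constant forcing. The crucial observation, available from identities (\ref{connau})--(\ref{connhn}), is that functions radial in $H^o(\cdot-x_i)$ turn the anisotropic Laplacian into the linear radial operator appearing in (\ref{u*kpro}).

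I would split the analysis into two cases. First, if $c=0$, the equation collapses to $L_0 u=\lambda u$; then $u$ is a local eigenfunction with zero average, and for every real $\beta$ one has $L_\beta u = L_0 u + \beta\cdot 0 = \lambda u$. Hence $\lambda$ is an eigenvalue of $L_\beta$ for all $\beta$, which yields the ``if'' direction of (b) and one branch of (a) at once. Second, if $c\neq 0$, I seek the restriction of $u$ to each $\mathcal{W}_{R_i}$ in the form $u(x)=\psi(H^o(x-x_i))$, so that $\psi$ solves the inhomogeneous Bessel-type equation
\begin{equation*}
\psi''+\tfrac{n-1}{\rho}\psi'+\lambda\psi = \alpha c,\qquad\psi(R_i)=0.
\end{equation*}
The unique regular-at-the-origin solution, provided $\lambda$ is not a local Dirichlet eigenvalue on $\mathcal{W}_{R_i}$, is
\begin{equation*}
\psi(\rho)=\frac{\alpha c}{\lambda}\Bigl(1-\frac{\phi_\lambda(\rho)}{\phi_\lambda(R_i)}\Bigr),\qquad\phi_\lambda(\rho):=\rho^{-(n/2-1)}J_{n/2-1}(\sqrt{\lambda}\rho).
\end{equation*}
Radiality of $u$ on each component is ensured by the uniqueness of the nonhomogeneous radial Dirichlet problem. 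Substituting back into $c=\int_\Omega u\,dx$ then yields a single scalar relation of the form $F(\lambda)\alpha = 1$ which determines $\alpha$ uniquely once $\lambda$ is fixed.

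Parts (a) and (b) follow quickly. For (a): if $\lambda$ were an eigenvalue of both $L_\alpha$ and $L_{\alpha'}$ with $\alpha\neq\alpha'$, the uniqueness in the $c\neq 0$ branch would force $\alpha=\alpha'$; hence at least one of the eigenfunctions must have zero mean, and the first case shows that $\lambda$ is a local eigenvalue and is an eigenvalue of $L_\beta$ for every real $\beta$. For (b), the ``if'' direction is immediate from the first case, and the ``only if'' follows because the uniqueness branch provides at most one $\alpha$ per $\lambda$, so having $\lambda$ as an eigenvalue for every $\alpha$ forces the existence of a zero-mean local eigenfunction.

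The main technical obstacle is the exceptional case in which $\lambda$ coincides with a local Dirichlet eigenvalue on one of the Wulff sets: there the radial inhomogeneous ODE either lacks a solution (forcing $c=0$) or admits a family parametrized by local eigenfunctions, and a Fredholm-type compatibility analysis on the ODE is then needed to extract a zero-mean local eigenfunction. A secondary subtlety is that for general $H$ the operator $L_0$ is only quasilinear---not linear---so the radial reduction (where it does become linear), together with a uniqueness theorem for radially symmetric data, is essential to rule out non-radial eigenfunctions in the $c\neq 0$ regime.
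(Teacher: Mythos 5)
Your overall strategy (split on $c=\int_\Omega u\,\text{d}x$, note that $c=0$ turns $u$ into a zero-average local eigenfunction valid for every $\alpha$, and show that $c\neq 0$ pins down $\alpha$ uniquely for a given $\lambda$) is sound and does yield (a) and (b). The genuine gap is the step on which the whole $c\neq 0$ branch rests: you claim that radiality of $u$ on each component, i.e. $u(x)=\psi(H^o(x-x_i))$ in $\mathcal{W}_{R_i}(x_i)$, is "ensured by the uniqueness of the nonhomogeneous radial Dirichlet problem". Uniqueness of the regular solution of the ODE $\psi''+\tfrac{n-1}{\rho}\psi'+\lambda\psi=\alpha c$, $\psi(R_i)=0$, only gives uniqueness \emph{within the class of radial functions}; it says nothing about whether the PDE $-\text{div}(H(\nabla u)\nabla H(\nabla u))=\lambda u-\alpha c$ with zero Dirichlet data admits non-radial solutions. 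In the isotropic case one would argue via the Fredholm alternative for the linear operator plus rotation invariance, but here the operator $L_0$ is $1$-homogeneous yet not additive, so no linear Fredholm theory applies and there is no symmetry group acting transitively on a Wulff set. You flag this yourself ("a uniqueness theorem for radially symmetric data ... is essential"), but that theorem is precisely what is missing, and without it your scalar relation $F(\lambda)\alpha=1$, hence the uniqueness of $\alpha$ per $\lambda$, is only established for radial eigenfunctions. (Minor loose ends: the Bessel representation $\phi_\lambda(\rho)=\rho^{1-n/2}J_{n/2-1}(\sqrt{\lambda}\rho)$ presumes $\lambda>0$, and the exceptional case where $\lambda$ is a radial Dirichlet eigenvalue of a component is only sketched, although within the radial class it is easy: $\phi_\lambda(R_i)=0$ together with $\alpha c\neq 0$ makes the boundary condition unsolvable, forcing $c=0$.)

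For comparison, the paper does not solve anything explicitly. It also takes the radial structure of eigenfunctions on each component as given ($u_i(x)=u_i^*(\kappa_n(H^o(x-x_i))^n)$), but uses it only, through (\ref{connau})--(\ref{connhn}), to obtain the symmetry of the cross Dirichlet forms (\ref{conequ}), $\int_{\mathcal{W}_i}H(\nabla u_i)\nabla H(\nabla u_i)\cdot\nabla v_i\,\text{d}x=\int_{\mathcal{W}_i}H(\nabla v_i)\nabla H(\nabla v_i)\cdot\nabla u_i\,\text{d}x$, which is not automatic for a nonlinear operator. Then, given eigenfunctions $u,v$ of $L_{\alpha_1},L_{\alpha_2}$ with the same $\lambda$, it multiplies the equations crosswise, integrates on each component, subtracts and sums to get $(\alpha_1-\alpha_2)\int_\Omega u\,\text{d}x\int_\Omega v\,\text{d}x=0$ as in (\ref{2ei2al}); for $\alpha_1\neq\alpha_2$ one eigenfunction has zero average and both statements follow, with no Bessel functions, no positivity assumption on $\lambda$, and no case distinction on whether $\lambda$ is a local eigenvalue of a component. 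Your route essentially anticipates the explicit computation the paper defers to Proposition \ref{proalp} (formula (\ref{alpmuu})), at the price of the unproved symmetry/uniqueness step and extra case analysis; the paper's pairing trick needs only the identity (\ref{conequ}) and two integrations by parts. If you want to salvage your argument, either prove the radial reduction you invoke, or replace the explicit solution by the paper's cross-pairing argument, which uses radiality in a much weaker way.
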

\begin{proof}
We set $\mathcal{W}_i$$=\mathcal{W}_{R_i}(x_i)$, $i=1,2$. We assume that $\lambda$ is an eigenvalue for two distinct parameters $\alpha_1$ and $\alpha_2$ and that $u$ and $v$ are the corresponding eigenfunctions. If we denote $u_i:=u|_{\mathcal{W}_i}$ and $v_i:=v|_{\mathcal{W}_i}$, $i=1,2$, then the functions $u_i$ satisfy
\begin{equation}
\label{a1uequ}
-\text{div} (H(\nabla u_i) \nabla H(\nabla u_i))+\alpha_1\left( \int_\Omega u\ \text{d}x  \right)=\lambda u_i \quad \text{on} \ \mathcal{W}_i, \ \text{for}\ i=1,2;
\end{equation}
and the functions $v_i$ satisfy
\begin{equation}
\label{a2vequ}
-\text{div} (H(\nabla v_i) \nabla H(\nabla v_i))+\alpha_2\left( \int_\Omega v\ \text{d}x  \right)=\lambda v_i \quad \text{on} \ \mathcal{W}_i, \ \text{for} \ i=1,2.
\end{equation}
We observe that $u_i(x)=u_i^*(\kappa_n (H^o(x-x_i))^{n})$ and $v_i(x)=v_i^*(\kappa_n (H^o(x-x_i))^n)$, $i=1,2$. This means that, by (\ref{connau}), (\ref{conhnu}) and (\ref{connhn}), we have  
\begin{equation}
\label{conequ}
\begin{split}
\int_{\mathcal{W}_i} H &(\nabla u_i) \nabla H(\nabla u_i)\nabla v_i \ \text{d}x \\
&= \int_{\mathcal{W}_i}-u_i^{*'}(\kappa_n(H^o(x-x_i))^n)  n\kappa_n (H^o(x-x_i))^{n-1}\frac{x-x_i}{H^o(x-x_i)}\cdot \\ 
&\qquad\qquad\qquad\qquad \cdot v_i^{*'}(\kappa_n(H^o(x-x_i))^n) n\kappa_n (H^o(x-x_i))^{n-1}\nabla H^o(x-x_i) \ \text{d}x\\ 
& =\int_{\mathcal{W}_i} H(\nabla v_i) \nabla H(\nabla v_i)\nabla u_i\ \text{d}x.
\end{split}
\end{equation}
for $i=1,2$. Now, we multiply the first equations of (\ref{a1uequ}) and (\ref{a2vequ}) respectively by $v_1$ and $u_1$, the second ones by $v_2$ and $u_2$ and then we integrate the first equations on $\mathcal{W}_1$ and the second ones on $\mathcal{W}_2$. By subtracting each one the equations integrated on $\mathcal{W}_1$ and using (\ref{conequ}), we get
\begin{equation}
\label{subfir}
\alpha_1\int_{\mathcal{W}_1} v_1\ \text{d}x \int_\Omega u \ \text{d}x  -\alpha_2  \int_{\mathcal{W}_1} u_1\ \text{d}x \int_\Omega v\ \text{d}x=0,
\end{equation}
in the same way we get also
\begin{equation}
\label{subsec}
\alpha_1 \int_{\mathcal{W}_2}v_2\ \text{d}x\int_\Omega u\ \text{d}x -\alpha_2  \int_{\mathcal{W}_2} u_2\ \text{d}x\int_\Omega v\ \text{d}x=0.
\end{equation}
Hence, the sum of (\ref{subfir}) and (\ref{subsec}) leads to
\begin{equation}
\label{2ei2al}
(\alpha_1-\alpha_2)\int_{\Omega} u\ \text{d}x\int_{\Omega} v\ \text{d}x=0.
\end{equation}
The result (a) follows because, if $\alpha_1$ and $\alpha_2$ are distinct, either $u_1$ or $u_2$ must have zero average, and hence satisfy the local equation. Finally, if (\ref{2ei2al}) is valid for all $\alpha_1$, $\alpha_2$, there is at least one eigenfunction with zero average and also (b) is proved.
\end{proof}

\begin{prop}
\label{sigthm}
Let $\Omega$ be a connected bounded open set and $\alpha \leq 0$. Then the first eigenvalue of \text{\rm (\ref{infquo})} is simple and the corresponding eigenfunction has constant sign in all $\Omega$.
\end{prop}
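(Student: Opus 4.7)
The plan is to argue in two stages: first, use the evenness of $H$ to symmetrize a minimizer into a nonnegative one and then invoke a strong maximum principle to conclude strict positivity; second, establish simplicity via an anisotropic Picone-type identity coupled with the sign condition on $\alpha$.

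For the constant-sign part, the homogeneity (\ref{hompro}) with $t=-1$ gives $H(-\xi)=H(\xi)$, hence $(H(\nabla|u|))^2=(H(\nabla u))^2$ almost everywhere for $u\in H_0^1(\Omega)$. Since $\alpha\leq 0$ and $(\int_\Omega |u|\,dx)^2\geq (\int_\Omega u\,dx)^2$, the Rayleigh quotient satisfies $\mathscr{Q}_\alpha(|u|,\Omega)\leq \mathscr{Q}_\alpha(u,\Omega)$, so any first eigenfunction may be replaced by its absolute value, producing a nonnegative first eigenfunction $w$. Rewriting the Euler--Lagrange equation as
\begin{equation*}
-\text{div}(H(\nabla w)\nabla H(\nabla w)) = \lambda w - \alpha\int_\Omega w\,dx,
\end{equation*}
with $w\geq 0$ and $-\alpha\int_\Omega w\,dx\geq 0$, I would invoke the strong maximum principle for quasilinear operators of $p$-Laplace type (V\'azquez, Tolksdorf; the bounds (\ref{betabs}) guarantee the uniform ellipticity needed) on the connected set $\Omega$ to conclude $w>0$ in $\Omega$. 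Then for any first eigenfunction $u$, $|u|$ is strictly positive, and since $u$ is continuous and nowhere vanishing on the connected $\Omega$, it has constant sign.

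For simplicity, I would employ the anisotropic Picone identity (Belloni--Kawohl; Brasco--Franzina for the $p$-Finsler case): for $u\geq 0$ and $v>0$ in $\Omega$,
\begin{equation*}
(H(\nabla u))^2 \geq H(\nabla v)\nabla H(\nabla v)\cdot\nabla\Bigl(\frac{u^2}{v}\Bigr),
\end{equation*}
with equality if and only if $u/v$ is constant; this follows from the convexity of $\xi\mapsto (H(\xi))^2$ combined with Euler's identity $\xi\cdot\nabla H(\xi)=H(\xi)$. Given two positive normalized first eigenfunctions $u_1,u_2$, I apply the identity with $u=u_1$, $v=u_2$, integrate over $\Omega$, and use the Euler--Lagrange equation for $u_2$ tested against $u_1^2/u_2$ to obtain
\begin{equation*}
\int_\Omega (H(\nabla u_1))^2\,dx \geq \lambda\int_\Omega u_1^2\,dx - \alpha\int_\Omega u_2\,dx\int_\Omega\frac{u_1^2}{u_2}\,dx.
\end{equation*}
Subtracting the identity $\int_\Omega (H(\nabla u_1))^2\,dx=\lambda\int_\Omega u_1^2\,dx-\alpha(\int_\Omega u_1\,dx)^2$ coming from $u_1$ being an eigenfunction yields
\begin{equation*}
(-\alpha)\Bigl[\int_\Omega u_2\,dx\int_\Omega\frac{u_1^2}{u_2}\,dx -\Bigl(\int_\Omega u_1\,dx\Bigr)^2\Bigr]\leq 0.
\end{equation*}
Cauchy--Schwarz makes the bracket nonnegative and $-\alpha\geq 0$, so all inequalities are forced to be equalities; in particular equality in Picone makes $u_1/u_2$ constant, which is simplicity. (In the boundary case $\alpha=0$, equality in the integrated Picone inequality follows directly, with the same conclusion.)

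The main obstacle I anticipate is the legality of $u_1^2/u_2$ as a test function in the weak Euler--Lagrange equation for $u_2$: since $u_2$ vanishes on $\partial\Omega$, one needs a Hopf-type lower bound $u_2\geq c\,\text{dist}(\cdot,\partial\Omega)$ to guarantee $u_1^2/u_2\in H_0^1(\Omega)$. I would bypass this by first using the regularized test function $u_1^2/(u_2+\varepsilon)$ and then passing to the limit $\varepsilon\to 0^+$ via dominated convergence, a standard device in the $p$-Laplace and Finsler eigenvalue literature.
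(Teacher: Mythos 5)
Your argument is correct, and while the constant-sign step is essentially the paper's own (pass to $|u|$ using the evenness of $H$ from (\ref{hompro}) together with $\alpha\le 0$, then get strict positivity from a maximum/Harnack principle on the connected set -- the paper cites Trudinger's weak Harnack inequality where you cite V\'azquez--Tolksdorf; since $\lambda(\alpha,\Omega)$ may be negative for $\alpha<0$, you should invoke the version of the strong maximum principle that tolerates a zero-order term, i.e. $-\mathrm{div}(H(\nabla w)\nabla H(\nabla w))+(-\lambda)w\ge 0$, which V\'azquez's result covers), your simplicity proof takes a genuinely different route. The paper normalizes the two positive eigenfunctions to have equal integrals as in (\ref{samint}) and runs the hidden-convexity argument with $\varphi=((u^2+w^2)/2)^{1/2}$: Jensen's inequality handles the gradient term, and the equal-average normalization is what makes the elementary inequality (\ref{crlicp}) combine correctly with $\alpha\le 0$; equality then forces $\nabla\log u=\nabla\log w$ via strict convexity. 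You instead integrate the anisotropic Picone inequality against the weak equation for $u_2$ (legitimately, after the $\varepsilon$-regularization $u_1^2/(u_2+\varepsilon)$ you describe -- note this also needs an $L^\infty$ bound on $u_1$, standard by Moser iteration under (\ref{betabs}), so that the regularized quotient lies in $H_0^1(\Omega)$) and control the nonlocal term by Cauchy--Schwarz, $(\int_\Omega u_1\,\mathrm{d}x)^2\le \int_\Omega u_2\,\mathrm{d}x\int_\Omega u_1^2/u_2\,\mathrm{d}x$. What each approach buys: yours needs no normalization of averages, and for $\alpha<0$ the equality case of Cauchy--Schwarz alone already yields $u_1=cu_2$, with the Picone equality case (which, like the paper's Jensen argument, rests on the strict convexity of the level sets of $H$ and on connectedness) needed only at $\alpha=0$; the paper's choice of $\varphi$, by contrast, is automatically an admissible $H_0^1$ competitor, so it avoids the test-function regularization and the regularity input altogether. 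Both arguments are sound; yours is a valid alternative rather than a reconstruction of the paper's proof.
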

\begin{proof}
For any $u\in H_0^1(\Omega)$ we have $\mathscr{Q}_\alpha (u,\Omega) \geq \mathscr{Q}_\alpha (|u|,\Omega)$ with equality if and only if $u=|u|$ or $u= -|u|$. From now on, without loss of generality, we can assume that $u\geq 0$. Let us observe that if $u$ is a minimizer of (\ref{infquo}), then it satisfies (\ref{operat}) with $\alpha\int_\Omega u \leq 0$. Therefore 
$u$ is strictly positive in $\Omega$ by a weak Harnack inequality (see \cite[Th. 1.2]{T}). Now, we give a proof of simplicity following the arguments of  \cite{BFK} and \cite{BK}. Let $u$ and $v$ be two positive eigenfunctions, then we can find a real constant $c$ such that $u$ and $c v$ have the same integral:
\begin{equation}
\label{samint}
\int_\Omega u \ \text{d}x= \int_\Omega c \ v \ \text{d}x.
\end{equation}
We call $w$ the function $cv$, which is again an eigenfunction and we set
\begin{equation}
\label{semsom}
\varphi=\left(\frac{u^2+w^2}{2}\right)^{1/2}
\end{equation}
which is an admissible function. A short calculation yields
\begin{equation}
\nabla \varphi=\frac{\sqrt{2}}{2}\frac{u\nabla u+w\nabla w}{(u^2+w^2)^{1/2}}
\end{equation} 
and hence, by homogeneity, we have
\begin{equation}
\begin{split}
(H(\nabla \varphi))^2&=\frac{u^2+w^2}{2}\left(H\left(\frac{u\nabla u+w\nabla w}{u^2+w^2}\right)\right)^2\\
&=\frac{u^2+w^2}{2}\left(H\left(\frac{u^2\nabla\log u+w^2\nabla\log w}{u^2+w^2}\right)\right)^2.
\end{split}
\end{equation} 
Because of the convexity of $H(\xi)$ and the fact that $u^2/(u^2+w^2)$ and $w^2/(u^2+w^2)$ add up to $1$, we can use Jensen's inequality to obtain
\begin{equation}
\label{jenine}
\begin{split}
(H(\nabla\varphi))^2 & \leq\frac{u^2+w^2}{2}\left[\frac{u^2}{u^2+w^2}(H(\nabla\log u))^2+\frac{w^2}{u^2+w^2}(H(\nabla\log w))^2\right]\\
&=\frac{1}{2} (H(\nabla u))^2+\frac{1}{2} (H(\nabla w))^2.
\end{split}
\end{equation}
On the other hand, we have
\begin{equation}
\label{crlicp}
\left(\int_\Omega\varphi \ \text{d}x\right)^2\geq \left(\int_\Omega \left(\frac{u}{2}+\frac{w}{2} \right) \  \text{d}x\right)^2 =  \frac{1}{2}\left(\int_\Omega u \ \text{d}x\right)^2 +\frac{1}{2} \left(\int_\Omega w \ \text{d}x\right)^2
\end{equation}
Hence, definition (\ref{infquo}) and inequalities (\ref{jenine})-(\ref{crlicp}) yield the following inequality chain
\begin{equation}
\label{simine}
\begin{split}
\lambda(\alpha,\Omega)& \leq\frac{\int_\Omega (H(\nabla \varphi))^2\ \text{d}x+\alpha \left(\int_\Omega \varphi\ \text{d}x\right)^2}{\int_\Omega \varphi^2\ \text{d}x}\\
&\leq\frac{\frac{1}{2}\int_\Omega (H(\nabla u))^2\ \text{d}x+\frac{1}{2}\int_\Omega (H(\nabla w))^2\ \text{d}x+\frac{\alpha}{2} \left(\int_\Omega u\ \text{d}x\right)^2+\frac{\alpha}{2} \left(\int_\Omega w\ \text{d}x\right)^2}{\frac{1}{2}\int_\Omega u^2\ \text{d}x+\frac{1}{2}\int_\Omega w^2\ \text{d}x}\\
&=\lambda(\alpha,\Omega).
\end{split}
\end{equation}
Therefore, inequalities in (\ref{simine}) hold as equalities. This implies that $(H(\nabla \varphi))^2=\frac{1}{2} (H(\nabla u))^2+\frac{1}{2} (H(\nabla w))^2$ almost everywhere. By (\ref{jenine}), the strict convexity of the level sets of $H$ gives that $\nabla\log u =\nabla\log w$ a.e.. This proves that $u$ and $w$ are constant multiples of each other and, in view of (\ref{samint}), we have $u=w$. Therefore $u$ and $v$ are proportional.
\end{proof}


\begin{prop}
\label{eigalp}
Let $\Omega$ be a bounded open set, then:
\begin{itemize}
\item[(a)] the first eigenvalue of \text{\rm(\ref{infquo})} $\lambda(\alpha,\Omega)$ is Lipschitz continuous and non-decreasing with respect to $\alpha$ (increasing when the eigenfunction relative to $\lambda(\alpha,\Omega)$ has nonzero average);
\item[(b)] for nonnegative values of $\alpha$, the first eigenvalue of \text{\rm(\ref{infquo})} $\lambda(\alpha,\Omega)$ satisfies
\begin{equation}
\label{anfakr}
\lambda(\alpha,\Omega)\geq \frac{\kappa_n^{2/n}j^2_{n/2-1,1}}{|\Omega|^{2/n}};
\end{equation}
\item[(c)] for nonnegative values of $\alpha$, if $\Omega$ is the union of two disjoint Wulff sets of equal radii, the first eigenvalue of \text{\rm(\ref{infquo})} $\lambda(\alpha, \Omega)$ is equal to $\frac{2^{2/n}\kappa_n^{2/n}j^2_{n/2-1,1}}{|\Omega|^{2/n}}$.
\end{itemize}
\end{prop}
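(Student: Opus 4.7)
For part (a), I would exploit the fact that the dependence on $\alpha$ in the Rayleigh quotient $\mathscr{Q}_\alpha$ is affine. If $u_{\alpha_2}\in H_0^1(\Omega)$ realizes $\lambda(\alpha_2,\Omega)$, then using it as a test function for $\lambda(\alpha_1,\Omega)$ gives
\[
\lambda(\alpha_1,\Omega)\leq \mathscr{Q}_{\alpha_1}(u_{\alpha_2},\Omega)=\lambda(\alpha_2,\Omega)+(\alpha_1-\alpha_2)\frac{\left(\int_\Omega u_{\alpha_2}\,\text{d}x\right)^2}{\int_\Omega u_{\alpha_2}^2\,\text{d}x},
\]
together with the symmetric inequality obtained by swapping $\alpha_1$ and $\alpha_2$. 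The nonnegative ratio $\left(\int_\Omega u\,\text{d}x\right)^2/\int_\Omega u^2\,\text{d}x$ lies in $[0,|\Omega|]$ by Cauchy--Schwarz, so combining the two inequalities gives simultaneously the monotonicity of $\lambda(\cdot,\Omega)$ and a Lipschitz estimate with constant $|\Omega|$. If $u_{\alpha_2}$ has nonzero average, the displayed inequality is strict for $\alpha_1<\alpha_2$, which yields the strict monotonicity clause.

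Part (b) is an immediate consequence of part (a): for $\alpha\geq 0$ one has $\mathscr{Q}_\alpha(u,\Omega)\geq \mathscr{Q}_0(u,\Omega)$ for every $u\in H_0^1(\Omega)$, hence $\lambda(\alpha,\Omega)\geq \lambda(0,\Omega)$, and the anisotropic Faber--Krahn inequality of Theorem~\ref{fabthm} closes the estimate.

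For part (c), my plan is to produce matching upper and lower bounds. Proposition~\ref{tworem}(b) provides a local ($\alpha=0$) eigenfunction $u_0$ on $\Omega$ with zero mean and eigenvalue $\frac{2^{2/n}\kappa_n^{2/n}j^2_{n/2-1,1}}{|\Omega|^{2/n}}$. Testing $u_0$ in (\ref{infquo}) annihilates the $\alpha$--term, giving
\[
\lambda(\alpha,\Omega)\leq \mathscr{Q}_\alpha(u_0,\Omega)=\mathscr{Q}_0(u_0,\Omega)=\frac{2^{2/n}\kappa_n^{2/n}j^2_{n/2-1,1}}{|\Omega|^{2/n}}.
\]
For the reverse inequality I would invoke the monotonicity from part (a) to write $\lambda(\alpha,\Omega)\geq \lambda(0,\Omega)$, and then identify $\lambda(0,\Omega)$ with the same constant via Proposition~\ref{tworem}(b).

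None of the three items presents a serious obstacle: everything reduces to Rayleigh-quotient bookkeeping together with Cauchy--Schwarz and the previously established Theorem~\ref{fabthm} and Proposition~\ref{tworem}. The only point requiring mild care is the Lipschitz estimate, where the precise constant $|\Omega|$ is pinned down by the Cauchy--Schwarz bound on $\left(\int_\Omega u\right)^2/\int_\Omega u^2$; the strict-increase assertion is likewise subtle only in that it must be read off from the correct direction of the two trial-function inequalities.
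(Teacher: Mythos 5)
Your proposal is correct and takes essentially the same route as the paper: the affine dependence on $\alpha$ together with the Cauchy--Schwarz bound $\left(\int_\Omega u\,\text{d}x\right)^2\leq|\Omega|\int_\Omega u^2\,\text{d}x$ gives monotonicity and the Lipschitz constant $|\Omega|$ in (a), part (b) is monotonicity plus Theorem~\ref{fabthm}, and part (c) rests on the zero-average local eigenfunction supplied by Proposition~\ref{tworem}(b). The only minor difference is that the paper handles the strict-increase clause of (a) and the identification in (c) by invoking Proposition~\ref{anoalp}, whereas you get both directly from Rayleigh-quotient testing (with the minimizer at the larger parameter, and with the zero-average eigenfunction, respectively) -- equivalent content, slightly more self-contained.
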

\begin{proof}
\noindent \begin{itemize}
\item[(a)] By simple computation we have the following inequalities
\begin{equation*}
\mathscr{Q}_\alpha (u,\Omega)\leq\mathscr{Q}_{\alpha+\varepsilon} (u,\Omega)\leq\mathscr{Q}_\alpha (u,\Omega)+|\Omega|\varepsilon\quad\forall \ \varepsilon>0.
\end{equation*}
Taking the minimum over all $u\in H_0^1(\Omega)$, we obtain
\begin{equation*}
\lambda (\alpha ,\Omega)\leq\lambda (\alpha+\varepsilon,\Omega)\leq\lambda (\alpha ,\Omega)+|\Omega|\varepsilon\quad\forall \ \varepsilon>0,
\end{equation*}
and, in view of Proposition \ref{anoalp}\emph{(a)-(b)}, the claim follows.
\item[(b)] By monotonicity of $\lambda(\alpha,\Omega)$ with respect to $\alpha$, we have that $\lambda(\alpha,\Omega)\geq\lambda (0, \Omega)$; then, by (\ref{fabkra}), we obtain the (\ref{anfakr}).
\item[(c)] By Proposition \ref{tworem}\emph{(b)}, if $\Omega$ is the union of two disjoint Wulff sets of equal radii, $\frac{2^{2/n}\kappa_n^{2/n}j^2_{n/2-1,1}}{|\Omega|^{2/n}}$ is the first eigenvalue of the local problem and it admits an eigenfunction with zero average. This implies that, by Proposition \ref{anoalp}\emph{(b)}, $\frac{2^{2/n}\kappa_n^{2/n}j^2_{n/2-1,1}}{|\Omega|^{2/n}}$ is an eigenvalue of $L_\alpha$ for all $\alpha$.
\end{itemize}
\end{proof}

\section{On the First Twisted Dirichlet Eigenvalue}
\label{sectwi}
In this Section we prove a Raylegh-Faber-Krahn type equation for the twisted eigenvalue problem
\begin{equation}
\label{inftwi}
\lambda^T(\Omega)=\inf_{\substack{u\in H_0^1(\Omega) \\ u \not\equiv 0}}\mathscr{Q}^T(u,\Omega),
\end{equation}
where
\begin{equation}
\mathscr{Q}^T(u,\Omega)=\left\{ \frac{\int_\Omega (H(\nabla u))^2\ \text{d}x}{\int_\Omega u^2\ \text{d}x},\ \int_\Omega u\ \text{d}x=0 \right\}. 
\end{equation}
Let us denote by
\begin{equation}
\label{deo+o-}
\Omega_+=\{x\in\Omega, u(x)>0\}\quad\text{and}\quad\Omega_-=\{x\in\Omega, u(x)<0\},
\end{equation}
and by $\mathcal{W}_+$ and $\mathcal{W}_-$ the Wulff sets such that $|\mathcal{W}_\pm|=|\Omega_\pm|$.

\begin{lem}
\label{lemudb}
\begin{equation*}
\lambda^T(\Omega)\geq\lambda^T(\mathcal{W}_+\cup \mathcal{W}_-)
\end{equation*} 
\end{lem}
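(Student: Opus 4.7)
The plan is to mimic the classical Freitas–Henrot strategy from the euclidean case, replacing Schwarz symmetrization with the convex (Wulff) rearrangement and invoking the anisotropic Pólya–Szegő principle stated in Theorem \ref{psieth}. The target inequality is $\lambda^T(\Omega)\geq \lambda^T(\mathcal{W}_+\cup\mathcal{W}_-)$, where $\mathcal{W}_\pm$ are Wulff sets with $|\mathcal{W}_\pm|=|\Omega_\pm|$ for some (minimizing) $u$.

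First I would show that $\lambda^T(\Omega)$ is attained. A standard argument via direct methods—take a minimizing sequence normalized in $L^2$, use $H$–equivalence with the euclidean norm (\ref{betabs}) to get boundedness in $H^1_0(\Omega)$, pass to a weak limit, and observe that both the zero-mean constraint $\int_\Omega u\,dx=0$ and $L^2$-normalization are preserved under the Rellich compact embedding—gives a minimizer $u\in H_0^1(\Omega)$. Then I define $u_+=\max\{u,0\}$, $u_-=\max\{-u,0\}$, both in $H_0^1(\Omega)$, with supports $\Omega_+$ and $\Omega_-$ as in (\ref{deo+o-}). The zero-mean condition translates into the balance
\begin{equation*}
\int_{\Omega_+} u_+\,dx=\int_{\Omega_-}u_-\,dx.
\end{equation*}

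Next I apply the convex rearrangement separately to $u_+$ and $u_-$, obtaining $u_+^\#\in H_0^1(\mathcal{W}_+)$ and $u_-^\#\in H_0^1(\mathcal{W}_-)$. By the equimeasurability built into (\ref{conrea}),
\begin{equation*}
\int_{\mathcal{W}_\pm}(u_\pm^\#)^2\,dx=\int_{\Omega_\pm}u_\pm^2\,dx,\qquad \int_{\mathcal{W}_\pm}u_\pm^\#\,dx=\int_{\Omega_\pm}u_\pm\,dx,
\end{equation*}
and Theorem \ref{psieth} gives
\begin{equation*}
\int_{\Omega_\pm}(H(\nabla u_\pm))^2\,dx\geq \int_{\mathcal{W}_\pm}(H(\nabla u_\pm^\#))^2\,dx.
\end{equation*}
Now I place $\mathcal{W}_+$ and $\mathcal{W}_-$ in $\mathbb{R}^n$ so that they are disjoint and define the test function
\begin{equation*}
v(x)=\begin{sistema} u_+^\#(x),\quad x\in\mathcal{W}_+,\\ -u_-^\#(x),\quad x\in\mathcal{W}_-.\end{sistema}
\end{equation*}
Then $v\in H_0^1(\mathcal{W}_+\cup\mathcal{W}_-)$ and, by the balance above, $\int v\,dx=0$, so $v$ is admissible for $\lambda^T(\mathcal{W}_+\cup\mathcal{W}_-)$.

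Finally, combining the three identities/inequalities yields
\begin{equation*}
\lambda^T(\mathcal{W}_+\cup\mathcal{W}_-)\leq \mathscr{Q}^T(v,\mathcal{W}_+\cup\mathcal{W}_-)=\frac{\int_{\mathcal{W}_+}(H(\nabla u_+^\#))^2+\int_{\mathcal{W}_-}(H(\nabla u_-^\#))^2}{\int_{\mathcal{W}_+}(u_+^\#)^2+\int_{\mathcal{W}_-}(u_-^\#)^2}\leq \mathscr{Q}^T(u,\Omega)=\lambda^T(\Omega).
\end{equation*}
The delicate points I expect to need care with are: (i) showing that $u_\pm\in H_0^1(\Omega)$ may be regarded as elements of $H_0^1(\Omega_\pm)$ so that the anisotropic Pólya–Szegő principle applies on their respective level sets (this is standard because $u_\pm$ vanishes on $\Omega\setminus\Omega_\pm$ and hence, after zero extension, lies in $H_0^1(\mathcal{W}_\pm)$ once the rearranged function is considered, irrespective of the regularity of $\partial\Omega_\pm$); and (ii) being sure that a minimizer actually exists rather than working with a minimizing sequence, which would force us to rearrange at each step and pass to the limit. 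Once these standard technicalities are handled, the chain of inequalities above gives the claim.
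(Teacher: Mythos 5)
Your argument is correct, and its first half coincides with the paper's: decompose the minimizer $u$ into $u_\pm$, apply the anisotropic P\'olya--Szeg\"o principle (Theorem \ref{psieth}) to each part, use equimeasurability to preserve the $L^1$ and $L^2$ quantities, and record the balance $\int_{\Omega_+}u_+=\int_{\Omega_-}u_-$ coming from the zero-mean constraint. Where you diverge is the concluding step. The paper, following Freitas--Henrot, introduces the constrained two-component problem
\begin{equation*}
\lambda^*=\inf\left\{\frac{\int_{\mathcal{W}_+}(H(\nabla f))^2+\int_{\mathcal{W}_-}(H(\nabla g))^2}{\int_{\mathcal{W}_+}f^2+\int_{\mathcal{W}_-}g^2}\ :\ \int_{\mathcal{W}_+}f=\int_{\mathcal{W}_-}g\right\},
\end{equation*}
proves this infimum is attained, and shows via the Euler--Lagrange system (\ref{twipro}) that the glued minimizer is a twisted eigenfunction on $\mathcal{W}_+\cup\mathcal{W}_-$, whence $\lambda^T(\Omega)\geq\lambda^*\geq\lambda^T(\mathcal{W}_+\cup\mathcal{W}_-)$. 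You instead observe that the glued rearranged function $v$ ($u_+^\#$ on $\mathcal{W}_+$, $-u_-^\#$ on $\mathcal{W}_-$) is itself admissible in the variational characterization (\ref{inftwi}) of $\lambda^T(\mathcal{W}_+\cup\mathcal{W}_-)$, since it has zero mean and is nontrivial, and you conclude by a single test-function comparison. This is legitimate and in fact more elementary: any pair $(f,g)$ with matching integrals corresponds to a zero-mean function on $\mathcal{W}_+\cup\mathcal{W}_-$ and vice versa, so the constrained problem the paper introduces is nothing but $\lambda^T(\mathcal{W}_+\cup\mathcal{W}_-)$ in disguise, and the existence/Euler--Lagrange step is not needed for the inequality of the lemma. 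What the paper's longer route buys is the explicit equation (\ref{twipro}), which is reused later (in the proof of Theorem \ref{isinth}) to reduce the computation on two Wulff sets to the radial ODE (\ref{ujwpro}) and the transcendental equation (\ref{eqcoeq}); your shortcut proves the lemma but does not provide that later ingredient. Your two flagged technical points are handled at the same level of rigor as in the paper (the paper also rearranges $u|_{\Omega_\pm}$ without discussing the regularity of $\partial\Omega_\pm$), and your explicit existence argument for a minimizer of $\lambda^T(\Omega)$ is a point the paper leaves implicit, so no gap remains.
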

\begin{proof}
Let us denote with $u^\#_+$ (resp. $u^\#_-$) the decreasing convex rearrangement of $u\arrowvert_{\Omega_+}$ (resp. $u\arrowvert_{\Omega_-}$). The P\'olya-Szeg\"o principle (\ref{poszin}) and properties of convex rearrangements provide
\begin{equation}
\label{eigmaj}
\lambda^T(\Omega) \geq\frac{\int_{\mathcal{W}_+} (H(\nabla u^\#_+))^2\ \text{d}s+\int_{\mathcal{W}_-} (H(\nabla u^\#_-))^2\ \text{d}s}{\int_{\mathcal{W}_+} (u^\#_+)^2\ \text{d}s+\int_{\mathcal{W}_-} ( u^\#_-)^2\ \text{d}s}
\end{equation}
and 
\begin{equation}
\label{zeavre}
\int_{\mathcal{W}_+} u^\#_+\ \text{d}s-\int_{\mathcal{W}_-} u^\#_-\ \text{d}s=\int_{\Omega^+} u\ \text{d}x+\int_{\Omega^-} u\ \text{d}x=\int_{\Omega} u\ \text{d}x=0.
\end{equation}
In view of (\ref{eigmaj}) and (\ref{zeavre}), we have the following inequality:
\begin{equation*}
\lambda^T(\Omega)\geq\lambda^*:=\inf_{\substack{ (f,g)\in H_0^1(\mathcal{W}_+)\times H_0^1(\mathcal{W}_-) \\ 
\int_{\mathcal{W}_+} f\ \text{d}s=\int_{\mathcal{W}_-} g\ \text{d}s}}\frac{\int_{\mathcal{W}_+} (H(\nabla f))^2\ \text{d}s+\int_{\mathcal{W}_-} (H(\nabla g))^2\ \text{d}s}{\int_{\mathcal{W}_+} f^2\ \text{d}s+\int_{\mathcal{W}_-} g^2\ \text{d}s}.
\end{equation*}
Using classical methods of calculus of variations, we can prove that this infimum is attained in $(f,g)$. Now, following the ideas of \cite[Sect. 3]{FH}, the function
\begin{equation*}
w=
\begin{sistema}
f\quad\text{in} \ \mathcal{W}_+\\
-g\quad\text{in} \ \mathcal{W}_-
\end{sistema}
\end{equation*}
satisfies
\begin{equation}
\label{twipro}
\begin{sistema}
-\text{div}(H(\nabla w)\nabla H(\nabla w))=\lambda^*w-\frac{1}{|\Omega|}\int_{\mathcal{W}_+\cup \mathcal{W}_-} \text{div}(H(\nabla w)\nabla H(\nabla w))\ \text{d}x\ \ \text{in}\ \mathcal{W}_+\cup \mathcal{W}_-\\
w=0\quad\text{on}\ \partial (\mathcal{W}_+\cup \mathcal{W}_-).
\end{sistema}
\end{equation}
This shows that $\lambda^*$ is an eigenvalue of the twisted problem (\ref{inftwi}) on $\mathcal{W}_+\cup \mathcal{W}_-$ and therefore, $\lambda^T (\Omega)\geq \lambda^*\geq\lambda^T (\mathcal{W}_+\cup \mathcal{W}_-)$.
\end{proof}
Throughout this Section, we investigate the first eigenvalue when $\Omega$ is the union of two disjoint Wulff sets, of radii $R_1\leq R_2$. Without loss of generality, we assume that the volume of $\Omega$ is such that
\begin{equation}
R_1^n+R_2^n=1
\end{equation}
and we denote by $\theta(R_1,R_2)$, the first positive root of equation
\begin{equation}
\label{eqcoeq}
R_1^n\frac{J_{\frac{n}{2}+1}\left(\theta\ R_1\right)}{J_{\frac{n}{2}-1}\left(\theta\ R_1\right)}+R_2^n\frac{J_{\frac{n}{2}+1}\left(\theta\ R_2\right)}{J_{\frac{n}{2}-1}\left(\theta\ R_2\right)}=0
\end{equation}
Now we recall a result given in \cite[Prop. 3.2]{FH}.

\begin{prop}
\label{compro}
There exists a constant $c_n<1$, depending on the dimension $n$, such that
\begin{itemize}
\item[(a)] if $R_1/R_2< c_n$, then $\lambda^T\left( \mathcal{W}_{R_1}\cup  \mathcal{W}_{R_2}\right)=\left(\frac{j_{\frac{n}{2},1}}{R_2}\right)^2$;\\
\item[(b)] if $R_1/R_2\geq c_n$, then $\lambda^T\left( \mathcal{W}_{R_1}\cup  \mathcal{W}_{R_2}\right)=\theta^2(R_1,R_2)$.
\end{itemize}
\end{prop}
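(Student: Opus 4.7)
The plan is to follow the strategy of \cite[Sect.~3]{FH}. Lemma~\ref{lemudb} together with the equality case in the Pólya-Szegő principle (Theorem~\ref{psieth}) reduces the search for a twisted minimizer $w$ on $\Omega = \mathcal{W}_{R_1}\cup\mathcal{W}_{R_2}$ to one whose restriction $w_i = w|_{\mathcal{W}_{R_i}}$ is, up to a sign, convex-radial on each component, i.e.\ of the form $w_i(x) = v_i(H^o(x - x_i))$. The Euler-Lagrange equation for the constrained problem reads
\[
-\text{div}(H(\nabla w)\nabla H(\nabla w)) = \lambda^T(\Omega)\, w - c \qquad \text{in } \Omega,
\]
with a single Lagrange multiplier $c$ enforcing $\int_\Omega w = 0$. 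Using the computation that produced (\ref{u*kpro}), on each Wulff component this becomes
\[
- v_i''(\rho) - \frac{n-1}{\rho}\, v_i'(\rho) = \lambda\, v_i(\rho) - c, \qquad \rho \in (0, R_i),
\]
with $v_i(R_i) = 0$ and $v_i$ regular at $\rho = 0$.

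The regular-at-origin general solution is $v_i(\rho) = A_i \rho^{-(n-2)/2} J_{n/2-1}(\sqrt\lambda\,\rho) + c/\lambda$; the Dirichlet condition at $\rho = R_i$ forces $A_i = -c\, R_i^{(n-2)/2}/[\lambda\, J_{n/2-1}(\sqrt\lambda R_i)]$ whenever the denominator is nonzero. A direct computation using the Bessel identity $\int_0^R \rho^{n/2} J_{n/2-1}(\sqrt\lambda\rho)\, d\rho = R^{n/2} J_{n/2}(\sqrt\lambda R)/\sqrt\lambda$ and the recurrence $n\, J_{n/2}(x) = x\,[J_{n/2-1}(x) + J_{n/2+1}(x)]$ then yields
\[
\int_{\mathcal{W}_{R_i}} v_i \, dx \;=\; -\frac{\kappa_n\, c\, R_i^n}{\lambda}\, \frac{J_{n/2+1}(\sqrt\lambda R_i)}{J_{n/2-1}(\sqrt\lambda R_i)}.
\]
Summing over $i=1,2$ and imposing the zero-mean condition, the nontrivial branch $c \neq 0$ collapses precisely to equation (\ref{eqcoeq}) with $\theta = \sqrt\lambda$; its first positive root (understood after clearing denominators, so as to capture the eigenvalues produced by the degenerate branches as well) gives the value $\theta^2(R_1, R_2)$ appearing in case (b).

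To obtain the value in case (a), I would test the twisted Rayleigh quotient with the function that is identically zero on $\mathcal{W}_{R_1}$ and agrees on $\mathcal{W}_{R_2}$ with the first twisted Dirichlet eigenfunction of that single Wulff set; the analogous single-ball analysis gives $\lambda^T(\mathcal{W}_{R_2}) = (j_{n/2,1}/R_2)^2$, and hence the upper bound $\lambda^T(\mathcal{W}_{R_1}\cup\mathcal{W}_{R_2}) \leq (j_{n/2,1}/R_2)^2$. Combining the two candidates, the twisted eigenvalue equals $\min\bigl(\theta^2(R_1,R_2),\, (j_{n/2,1}/R_2)^2\bigr)$, and a monotonicity analysis of $\theta^2$ as a function of $R_1/R_2$ under the normalization $R_1^n + R_2^n = 1$ produces a unique crossing ratio $c_n \in (0, 1)$, defined implicitly by the condition that $\theta R_2 = j_{n/2,1}$ be a root of (\ref{eqcoeq}), i.e.\ $J_{n/2+1}(j_{n/2,1}\, c_n)/J_{n/2-1}(j_{n/2,1}\, c_n) = c_n^{-n}$.

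The main obstacle I anticipate is twofold. First, one must rule out competing branches of the Euler-Lagrange system that could produce smaller eigenvalues: the degenerate $c = 0$ solutions (which force $\sqrt\lambda R_i$ to be a zero of $J_{n/2-1}$ and impose rigid compatibility between $R_1$ and $R_2$, so occur only on a measure-zero set of ratios when $R_1 < R_2$) and minimizers that change sign inside a single Wulff component (excluded by a further application of Lemma~\ref{lemudb} with $\mathcal{W}_+, \mathcal{W}_-$ replacing each component). Second, and more technical, is establishing the monotonicity of $\theta(R_1,R_2)$ in the ratio $R_1/R_2$ and the strict transversality of the two candidate curves at $c_n$; both rest on standard but delicate interlacing and convexity properties of Bessel functions, which also show that $c_n$ depends only on the dimension $n$ and not on $H$.
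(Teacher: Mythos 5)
Your derivation of the branch leading to equation (\ref{eqcoeq}) is correct, and it is essentially the computation the paper itself carries out -- not in a proof of Proposition \ref{compro}, which the paper does not prove but merely recalls from \cite[Prop.~3.2]{FH}, but inside the proof of Theorem \ref{isinth}: there the author fixes the two amplitudes by the zero-average condition and then matches the Lagrange multiplier across the two components, whereas you solve each radial problem (\ref{ujwpro}) with the multiplier $c$ and impose zero average; the two routes are equivalent, your Bessel identities check out, and your implicit equation $J_{n/2+1}(j_{n/2,1}c_n)/J_{n/2-1}(j_{n/2,1}c_n)=c_n^{-n}$ is indeed the condition that $\theta R_2=j_{n/2,1}$ solve (\ref{eqcoeq}).

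The genuine gap is case (a), and with it the completeness of your list of candidates. The value $(j_{n/2,1}/R_2)^2$ is, in the Euclidean setting, the second Dirichlet eigenvalue of the larger ball; Freitas--Henrot obtain the dichotomy because their Euler--Lagrange equation $-\Delta u=\lambda u-c$ is \emph{linear}, so on each ball one can expand in spherical harmonics, the multiplier couples only to the radial modes (giving the $\theta$-branch), and the non-radial modes are ordinary zero-average Dirichlet eigenfunctions, the smallest being $(j_{n/2,1}/R_2)^2$. None of this machinery survives for the anisotropic operator, which is quasilinear and not additive: there is no separation of variables on $\mathcal{W}_R$, no explicit ``second eigenfunction'', and no argument given (here or in the paper) that $\lambda^T(\mathcal{W}_R)=(j_{n/2,1}/R)^2$; your appeal to ``the analogous single-ball analysis'' is exactly the missing step, needed both for the upper bound in (a) and for the lower bound that excludes admissible functions beating both candidates. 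Relatedly, your plan to rule out minimizers changing sign inside one component via Lemma \ref{lemudb} cannot work and even conflicts with your own construction: the case-(a) candidate (the twisted eigenfunction of $\mathcal{W}_{R_2}$, extended by zero) changes sign inside the larger component, and in the Euclidean case it \emph{is} the minimizer for $R_1/R_2<c_n$; moreover Lemma \ref{lemudb} replaces $\Omega_\pm$ by Wulff sets of measures $|\Omega_\pm|$, i.e.\ by a pair with the same total measure but in general different radii, which suffices for Theorem \ref{isinth} (a minimization over all pairs) but does not identify $\lambda^T(\mathcal{W}_{R_1}\cup\mathcal{W}_{R_2})$ for the fixed pair $(R_1,R_2)$ that Proposition \ref{compro} concerns; finally, the P\'olya--Szeg\"o equality case (Theorem \ref{psieth}) gives radial profiles only for minimizers of constant sign on each component. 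So as written you have proved the branch equation behind (b) and an upper bound by the coupled radial candidate, but not the exact values in (a) and (b) nor the crossover at $c_n$ in the anisotropic setting -- which is precisely why the paper can only cite \cite{FH}, whose proof does not transfer verbatim to Wulff sets; a complete proof must supply this non-radial ingredient (and, for the paper's actual use in Theorem \ref{isinth}, it would suffice to establish the lower bound $\lambda^T\ge\theta^{*2}$ directly).
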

\noindent Moreover, if we set $\theta^* =2^{1/n}j_{\frac{n}{2}-1,1}$, we obtain the following

\begin{prop}
\label{thesta}
The first positive root equation of {\rm (\ref{eqcoeq})} $\theta(R_1,R_2)$ satisfies
\begin{equation}
 \theta(R_1,R_2)\geq \theta^*,
 \end{equation}
for all $R_1$, $R_2\geq 0$.
\end{prop}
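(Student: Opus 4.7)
My plan is to work with the substitution $\alpha = \theta R_1$, $\beta = \theta R_2$. Under the normalization $R_1^n + R_2^n = 1$ we have $\alpha^n + \beta^n = \theta^n$, and equation \eqref{eqcoeq} becomes $\Phi(\alpha) + \Phi(\beta) = 0$, where $\Phi(x) := x^n J_{n/2+1}(x)/J_{n/2-1}(x)$. The claim $\theta(R_1,R_2) \geq \theta^* = 2^{1/n} j_{n/2-1,1}$ thus reduces to showing that the smallest positive $\theta$ satisfying the equation obeys $\alpha^n + \beta^n \geq 2 j_{n/2-1,1}^n$.

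First I would track the sign of $F(\theta) := R_1^n \phi(\theta R_1) + R_2^n \phi(\theta R_2)$ as $\theta$ increases from $0$, where $\phi(x) = J_{n/2+1}(x)/J_{n/2-1}(x)$. On $(0, j_{n/2-1,1})$ both $J_{n/2\pm1}$ are positive, so $\phi>0$; assuming wlog $R_1 \leq R_2$, this gives $F>0$ on $(0, j_{n/2-1,1}/R_2)$ and hence no zero there. At $\theta = j_{n/2-1,1}/R_2$ the ratio $J_{n/2+1}(\theta R_2)/J_{n/2-1}(\theta R_2)$ jumps from $+\infty$ to $-\infty$, and on the next interval $\theta R_1$ still lies below $j_{n/2-1,1}$ (since $R_1 \leq 2^{-1/n}$ gives $\theta^* R_1 \leq j_{n/2-1,1}$). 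To conclude $\theta(R_1,R_2) \geq \theta^*$ it therefore suffices to show $F(\theta^*) \leq 0$, for then by continuity the first zero of $F$ past the pole cannot occur before $\theta^*$.

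Second, the inequality $F(\theta^*)\leq 0$ becomes, with $\alpha = \theta^* R_1 \in (0, j_{n/2-1,1}]$ and $\beta = \theta^* R_2 \in [j_{n/2-1,1}, \theta^*]$, the estimate $\Phi(\alpha) + \Phi(\beta) \leq 0$ under $\alpha^n + \beta^n = 2 j_{n/2-1,1}^n$. Using the recurrence $J_{n/2+1}(x) = (n/x) J_{n/2}(x) - J_{n/2-1}(x)$ I would rewrite $\Phi(x) = n\,x^{n-1}\chi(x) - x^n$ with $\chi = J_{n/2}/J_{n/2-1}$, reducing the target inequality to
\[
\alpha^{n-1}\chi(\alpha) + \beta^{n-1}\chi(\beta) \;\leq\; \tfrac{2}{n}\, j_{n/2-1,1}^n.
\]
I would verify this by combining the Laurent expansion of $\chi$ at its pole $j_{n/2-1,1}$ (which, using the signs $B := J_{n/2+1}(j_{n/2-1,1})>0$ and $A := J'_{n/2-1}(j_{n/2-1,1})<0$, yields the desired leading-order cancellation when $\alpha^n + \beta^n$ is prescribed) with the power-mean inequality $\alpha + \beta \leq 2 j_{n/2-1,1}$, which is a consequence of the convexity of $x\mapsto x^n$ applied to the constraint. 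The boundary cases are straightforward: at $\alpha = 0$ one has $\beta = \theta^*$, and $\Phi(\theta^*) < 0$ since $\theta^* \in (j_{n/2-1,1}, j_{n/2+1,1})$ (this containment, equivalent to $2^{1/n} j_{n/2-1,1} < j_{n/2+1,1}$, is a standard Bessel-zero inequality); the symmetric limit $\alpha,\beta\to j_{n/2-1,1}$ requires a careful asymptotic argument showing that the Laurent tails at the pole leave a negative residue.

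The main obstacle I expect is that $\theta(R_1,R_2)$ on the constraint curve is \emph{not} monotone in $R_1$: numerical evidence suggests a local maximum lying near the transition between cases (a) and (b) of Proposition \ref{compro}, so a shape-derivative or monotonicity argument does not close the problem. One must instead treat $F(\theta^*)\leq 0$ as a direct pointwise inequality on the one-parameter family of $(R_1, R_2)$, and the subtle part is that equality is approached only in the degenerate limit $R_1 = R_2 = 2^{-1/n}$ (where $J_{n/2-1}(\theta^* R_i) = 0$ and the eigenfunction construction collapses back to the $C = 0$, i.e.\ purely Dirichlet, case giving Proposition \ref{tworem}(b)). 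The rigor of the asymptotic cancellation near that degenerate limit is the delicate point on which the whole argument turns.
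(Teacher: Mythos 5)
Your reduction and the sign bookkeeping up to the first pole are fine, but the pivotal step \lq\lq it therefore suffices to show $F(\theta^*)\le 0$\rq\rq\ is not justified by continuity alone: on the pole-free interval past $j_{n/2-1,1}/R_2$ the function $F$ starts at $-\infty$, and nothing in your argument prevents it from crossing zero at some $\theta_0<\theta^*$ and returning to a nonpositive value at $\theta^*$; in that scenario the first root of (\ref{eqcoeq}) would be $\theta_0<\theta^*$ even though $F(\theta^*)\le 0$. To close this you need a single-crossing property, for instance that $\phi=J_{n/2+1}/J_{n/2-1}$ is strictly increasing between consecutive zeros of $J_{n/2-1}$ (which follows from the Mittag--Leffler expansion $\phi(x)=\sum_{k\ge 1}2n/(j_{n/2-1,k}^2-x^2)-1$, so that $F$ is increasing on each pole-free interval), together with the check that $2^{1/n}j_{n/2-1,1}R_2<j_{n/2-1,2}$, so that no second pole of the $R_2$-term enters $(j_{n/2-1,1}/R_2,\theta^*)$. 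Neither ingredient appears in your proposal.

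The more serious gap is that the inequality $F(\theta^*)\le 0$ itself --- equivalently $\alpha^{n-1}\chi(\alpha)+\beta^{n-1}\chi(\beta)\le \tfrac{2}{n}\,j_{n/2-1,1}^n$ on the curve $\alpha^n+\beta^n=2j_{n/2-1,1}^n$ --- is, in your formulation, the entire content of the proposition, and you do not prove it. The Laurent expansion of $\chi$ at $j_{n/2-1,1}$ only governs a neighbourhood of the corner $\alpha=\beta=j_{n/2-1,1}$, where the two terms blow up with opposite signs and the sign of the sum is decided by second-order terms and by how the constraint couples $\alpha$ to $\beta$; away from that corner the power-mean bound $\alpha+\beta\le 2j_{n/2-1,1}$ gives no control of $\chi$ on $(0,j_{n/2-1,1})$. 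You yourself call this cancellation \lq\lq the delicate point on which the whole argument turns\rq\rq, and that is precisely the point left unproved, so what you have is a programme rather than a proof. Note also that the paper does not argue directly here: since (\ref{eqcoeq}) involves only Bessel functions and the normalization $R_1^n+R_2^n=1$, the anisotropy plays no role and the statement is exactly Lemma 3.3 of \cite{FH}; the paper's proof is that citation together with the scaling remark. Either invoke that lemma, or carry out your plan in full: monotonicity of $\phi$ between poles, absence of a second pole before $\theta^*$, and a complete proof of the two-variable inequality, uniform up to the degenerate corner $R_1=R_2$.
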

This result is proved in \cite[Lemma 3.3]{FH} when $\Omega$ has the same measure as the unit ball, but it can be obtained for all sets of finite measure. Now, we show the following isoperimetric inequality.

\begin{thm}
\label{isinth}
Let $\Omega$ be any bounded open set in $\R^n$. Then
\begin{equation}
\label{geisin}
\lambda^T(\Omega)\geq\lambda^T(\mathcal{W}_1 \cup \mathcal{W}_2)
\end{equation}
where $\mathcal{W}_1$ and $\mathcal{W}_2$ are two disjoint Wulff sets of measure $|\Omega|/2$. Equality holds if and only if $\Omega = \mathcal{W}_1\cup \mathcal{W}_2$.
\end{thm}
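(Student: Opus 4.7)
\noindent\emph{Proof plan.} The strategy is to first apply Lemma \ref{lemudb} to reduce to the case where $\Omega$ is already a union of two disjoint Wulff sets, and then to identify the optimum inside this one-parameter family by means of Propositions \ref{compro} and \ref{thesta}.

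By Lemma \ref{lemudb}, $\lambda^T(\Omega)\ge\lambda^T(\mathcal{W}_+\cup\mathcal{W}_-)$, so it is enough to show that, among unions $\mathcal{W}_{R_1}\cup\mathcal{W}_{R_2}$ of two disjoint Wulff sets with the total volume $\kappa_n(R_1^n+R_2^n)$ kept equal to $|\Omega|$, the twisted eigenvalue is minimal when $R_1=R_2$. Using the scaling $\lambda^T(t\Omega)=t^{-2}\lambda^T(\Omega)$, I normalize so that $R_1^n+R_2^n=1$ and I take $R_1\le R_2$, so in particular $R_2\in[2^{-1/n},1]$. For the symmetric configuration $R_1=R_2=2^{-1/n}$, Proposition \ref{tworem}\emph{(b)} supplies an eigenfunction of the local problem with zero average, which is admissible for (\ref{inftwi}) and attains
\begin{equation*}
\lambda^T(\mathcal{W}_1\cup\mathcal{W}_2)=2^{2/n}j^2_{n/2-1,1}=(\theta^*)^2.
\end{equation*}

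The rest of the argument splits along the dichotomy in Proposition \ref{compro}. In the regime $R_1/R_2\ge c_n$, Proposition \ref{thesta} gives $\lambda^T(\mathcal{W}_{R_1}\cup\mathcal{W}_{R_2})=\theta^2(R_1,R_2)\ge(\theta^*)^2$, with equality only when $R_1=R_2$ by inspection of (\ref{eqcoeq}). In the regime $R_1/R_2< c_n$ one has $\lambda^T=(j_{n/2,1}/R_2)^2\ge j^2_{n/2,1}$, since $R_2\le 1$ follows from $R_1\ge 0$ and the normalization. To conclude, I use the Bessel inequality
\begin{equation*}
j^2_{n/2,1}\ \ge\ 2^{2/n}\,j^2_{n/2-1,1}=(\theta^*)^2,
\end{equation*}
which is exactly the Faber--Krahn-type statement comparing a single Wulff set with two equal Wulff sets of the same total volume.

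I expect this last Bessel inequality to be the main obstacle: it does not follow from continuity at the boundary ratio $R_1/R_2=c_n$ alone, but must be established independently, following the monotonicity analysis of Bessel zeros in \cite{FH}. Once this inequality is in place, the equality case in (\ref{geisin}) is obtained by tracing the strict inequalities backwards: equality forces equality in Lemma \ref{lemudb}, which by the equality part of Theorem \ref{psieth} forces each $\Omega_\pm$ to be a Wulff set; the regime $R_1/R_2<c_n$ gives a strict inequality unless $R_1=R_2$, while the regime $R_1/R_2\ge c_n$ gives equality only at $R_1=R_2$ via Proposition \ref{thesta}. Hence equality in (\ref{geisin}) holds if and only if $\Omega$ is the union of two disjoint Wulff sets of equal measure.
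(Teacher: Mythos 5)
Your proposal is correct and follows essentially the same route as the paper: reduction via Lemma \ref{lemudb}, the dichotomy of Proposition \ref{compro} combined with Proposition \ref{thesta}, and the Bessel-zero inequality $j_{n/2,1}>2^{1/n}j_{n/2-1,1}$, which you rightly flag as the key ingredient and which the paper simply imports from \cite[Cor. A.2]{FH} rather than reproving. The only real difference is that the paper additionally carries out the explicit Bessel-function computation on $\mathcal{W}_{R_1}\cup\mathcal{W}_{R_2}$ to derive the transcendental equation (\ref{eqcoeq}) in the anisotropic setting, a verification you bypass by invoking Proposition \ref{compro} as stated.
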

\begin{proof}
Thanks to Lemma \ref{lemudb}, it remains to prove that the union of two disjoint Wulff sets with the same measure gives the lowest possible value of $\lambda^T(\cdot)$ among unions of disjoint Wulff sets with given measure $|\Omega|$. Hence, we compute the first twisted eigenvalue of the union $\Omega$ of the Wulff sets $\mathcal{W}_{R_1}(x_1)$ and $\mathcal{W}_{R_2}(x_2)$, with $R_1 \leq R_2$. If we consider the eigenfunction that is zero on the smaller Wulff set and coincides with the first eigenfunction on the larger one, we trivially have $\lambda_1^T(\mathcal{W}_{R_1}(x_1) \cup \mathcal{W}_{R_2}(x_2))=\lambda_1^T(\mathcal{W}_{R_2}(x_2))$. 

Now we study the case in which the eigenfunction $u$ does not vanish on any of the two Wulff sets. We denote by $u_1$ and $u_2$ the functions that express $u$ respectively on $\mathcal{W}_{R_1}(x_1)$ and $\mathcal{W}_{R_2}(x_2)$. The proof of Lemma \ref{lemudb} shows that we can study only functions dependent on the radius of the Wulff set in which are defined. Therefore, in an abuse of notation, we consider functions such that $u_j(x) = u_j(H^o(x-x_j))$, for $j=1, 2$, and hence, instead of (\ref{twipro}), we can solve equivalently (see \cite{PII})
\begin{equation}
\label{ujwpro}
\begin{sistema}
u_j''(\rho)+ \frac{n-1}{\rho} u_j'(\rho)+\lambda^T u_j(\rho)=c, \ 0 < \rho < {R_j}\\
u_j'(0)=0, \ u_j(R_j)=0,
\end{sistema}
\end{equation}
for $j=1,2$, where $c=\frac{1}{|\Omega|}\int_\Omega \text{div}(H(\nabla u)\nabla H(\nabla u))\ \text{d}x$. Therefore, the solution $u$ of (\ref{ujwpro}) can be written in the form:
\begin{equation}
\label{discon}
u=
\begin{sistema}
u_1=c_1\left((H^o(x-x_1))^{1-\frac{n}{2}}J_{\frac{n}{2}-1}\left(\sqrt{\lambda^T}\ H^o(x-x_1)\right)\right.\\
\qquad\qquad\qquad\qquad\qquad\qquad\qquad\quad \left.-R_1^{1-\frac{n}{2}}J_{\frac{n}{2}-1}\left(\sqrt{\lambda^T}\ R_1\right)\right) \ \text{in} \ \mathcal{W}_{R_1}(x_1) \\
u_2=-c_2\left((H^o(x-x_2))^{1-\frac{n}{2}}J_{\frac{n}{2}-1}\left(\sqrt{\lambda^T}\ H^o(x-x_2)\right)\right.\\
\qquad\qquad\qquad\qquad\qquad\qquad\qquad\quad\left.-R_2^{1-\frac{n}{2}}J_{\frac{n}{2}-1}\left(\sqrt{\lambda^T}\ R_2\right)\right) \ \text{in} \ \mathcal{W}_{R_2}(x_2)
\end{sistema}
\end{equation}
Now we express the coupling condition $\int_\Omega u \ \text{d}x=0$ as
\begin{equation}
0=\int_{\mathcal{W}_{R_1}}u_1\ \text{d}x+\int_{\mathcal{W}_{R_2}} u_2\ \text{d}x,
\end{equation}
and hence we obtain
\begin{equation*}
\begin{split}
0&=c_1\left( \gamma_n\int_0^{R_1}J_{\frac{n}{2}-1}\left(\sqrt{\lambda^T}\ \rho \right) \rho^\frac{n}{2}\ \text{d}\rho- \kappa_n R_1^{\frac{n}{2}+1}J_{\frac{n}{2}-1}\left(\sqrt{\lambda^T}\ R_1\right)\right)\\
& \qquad -c_2\left( \gamma_n\int_0^{R_2}J_{\frac{n}{2}-1}\left(\sqrt{\lambda^T}\ \rho\right) \rho^\frac{n}{2}\ \text{d}\rho- \kappa_n R_2^{\frac{n}{2}+1}J_{\frac{n}{2}-1}\left(\sqrt{\lambda^T}\ R_2\right)\right).
\end{split}
\end{equation*}
We use classical properties of Bessel functions \cite{W}, namely
\begin{equation*}
\int_0^RJ_{\frac{n}{2}-1}\left(k r\right)r^\frac{n}{2}dr=\frac{1}{k}R^\frac{n}{2}J_\frac{n}{2}(kr) \quad \text{and} \quad \frac{n}{kr}J_\frac{n}{2}(kr)-J_{\frac{n}{2}-1}(kr)=J_{\frac{n}{2}+1}(kr),
\end{equation*}
together with $\gamma_n=n\kappa_n$, where $\gamma_n$  is the generalized perimeter of $\mathcal{W}$, to get
\begin{equation}
c_1R_1^{\frac{n}{2}+1}J_{\frac{n}{2}+1}\left(\sqrt{\lambda^T}\ R_1\right)-c_2R_2^{\frac{n}{2}+1}J_{\frac{n}{2}+1}\left(\sqrt{\lambda^T}\ R_2\right)=0.
\end{equation}
Hence it is possible to take
\begin{equation}
\label{c_1c_2}
c_1=R_2^{\frac{n}{2}+1}J_{\frac{n}{2}+1}\left(\sqrt{\lambda^T}\ R_2\right)\ \ \text{and}\ \ c_2=R_1^{\frac{n}{2}+1}J_{\frac{n}{2}+1}\left(\sqrt{\lambda^T}\ R_1\right)
\end{equation}
in (\ref{discon}). 

Now we want that the constant $c$ in (\ref{ujwpro}) is the same for $j=1$ and for $j=2$. This automatically implies that this constant $c$ coincides with the average of the anisotropic laplacian computed on $u$. Since 
\begin{equation*}
\begin{split}
\text{div}(H(\nabla u_1)\nabla H(\nabla u_1))=-c_1\lambda^T (H^o(x-x_1))^{1-\frac{n}{2}}J_{\frac{n}{2}-1}\left(\sqrt{\lambda^T}\ H^o(x-x_1) \right)\\
\text{div}(H(\nabla u_2)\nabla H(\nabla u_2))=c_2\lambda^T (H^o(x-x_2))^{1-\frac{n}{2}}J_{\frac{n}{2}-1}\left(\sqrt{\lambda^T}\ H^o(x-x_2)\right),
\end{split}
\end{equation*}
we have
\begin{equation*}
\begin{split}
c=\text{div}(H(\nabla u_1) & \nabla H(\nabla u_1))+\lambda^Tu_1=- c_1 \lambda^T  R_1^{1-\frac{n}{2}}J_{\frac{n}{2}-1}\left(\sqrt{\lambda^T}\ R_1\right)\\
c=\text{div}(H(\nabla u_2) & \nabla H(\nabla u_2))+\lambda^Tu_1=c_2  \lambda^T  R_2^{1-\frac{n}{2}}J_{\frac{n}{2}-1}\left(\sqrt{\lambda^T}\ R_2\right).\\
\end{split}
\end{equation*}
Comparing this two relations and taking in account (\ref{c_1c_2}), if we set $\lambda^T( \mathcal{W}_{R_1}(x_1)\cup  \mathcal{W}_{R_2}(x_2))=\theta^2$, the condition $-c+c=0$ gives the equation (\ref{eqcoeq}). Now we observe that, in the case that $R_1/R_2<c_n$, by Proposition \ref{compro}\emph{(a)} and by the inequality $j_{\frac{n}{2},1}>2^{1/n}j_{\frac{n}{2}-1,1}$ \cite[Cor. A.2]{FH}, we have
\begin{equation}
\lambda^T( \mathcal{W}_{R_1}(x_1)\cup  \mathcal{W}_{R_2}(x_2))\geq \left( \frac{j_{\frac{n}{2},1}}{R_2} \right)^2\geq\left( j_{\frac{n}{2},1} \right)^2>\left( 2^{1/n}j_{\frac{n}{2}-1,1}\right)^2=\theta^{*2}.
\end{equation}
If $R_1/R_2\geq c_n$, by Proposition \ref{compro}\emph{(b)} and Proposition \ref{thesta}, we have
\begin{equation}
\lambda^T( \mathcal{W}_{R_1}(x_1)\cup  \mathcal{W}_{R_2}(x_2)) = (\theta(R_1,R_2))^2\geq\theta^{*2}.
\end{equation}
Therefore, in both case, we obtain that $\lambda^T( \mathcal{W}_{R_1}(x_1)\cup  \mathcal{W}_{R_2}(x_2)) \geq\theta^{*2}$ and since $\theta^*$ is the value of $\lambda^T(\Omega)$ computed on two Wulff sets with the same measure, this conclude the proof.
\end{proof}

\section{The Nonlocal Problem}
\label{secpro}
The aim of this Section is to prove Theorem \ref{thmsat}. We start by showing some preliminary results.
\begin{thm}
\label{twithe}
Let $\Omega$ be an open bounded set in $\R^n$, then there exists a positive value of $\alpha$ such that the corresponding first eigenvalue $\lambda(\alpha,\Omega)$ is greater or equal than $\frac{2^{2/n}\kappa_n^{2/n}j^2_{n/2-1,1}}{|\Omega|^{2/n}}$.
\end{thm}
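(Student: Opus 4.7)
The plan is to send $\alpha$ to $+\infty$ and identify the limit with the twisted eigenvalue: I would prove
\begin{equation*}
\lim_{\alpha \to +\infty} \lambda(\alpha,\Omega) = \lambda^T(\Omega),
\end{equation*}
and then invoke Theorem \ref{isinth} to bound the right-hand side from below. The upper bound is immediate, as every $u \in H_0^1(\Omega)$ with $\int_\Omega u\,\text{d}x = 0$ gives $\mathscr{Q}_\alpha(u,\Omega) = \mathscr{Q}^T(u,\Omega)$, whence $\lambda(\alpha,\Omega) \leq \lambda^T(\Omega)$. For the matching $\liminf$, take $L^2$-normalized minimizers $u_\alpha$ of (\ref{infquo}): the bound $\mathscr{Q}_\alpha(u_\alpha,\Omega) \leq \lambda^T(\Omega)$ keeps $\int_\Omega (H(\nabla u_\alpha))^2\,\text{d}x$ uniformly bounded and forces $(\int_\Omega u_\alpha\,\text{d}x)^2 \leq \lambda^T(\Omega)/\alpha \to 0$. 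Using the norm equivalence (\ref{betabs}), Rellich--Kondrachov yields a strong $L^2$-limit $u_* \in H_0^1(\Omega)$ with unit norm and zero mean. Convexity of $H$ gives weak lower semicontinuity of the anisotropic Dirichlet energy, so $\lambda^T(\Omega) \leq \mathscr{Q}^T(u_*,\Omega) \leq \liminf_{\alpha \to +\infty} \lambda(\alpha,\Omega)$, which, combined with the monotonicity in Proposition \ref{eigalp}(a), closes the limit.

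Once the limit is established, Theorem \ref{isinth} gives $\lambda^T(\Omega) \geq \lambda^T(\mathcal{W}_1 \cup \mathcal{W}_2)$ with each $\mathcal{W}_i$ of measure $|\Omega|/2$. A short scaling computation (or Proposition \ref{tworem}(b) combined with the observation that the first Dirichlet eigenfunction on such a union can be chosen with zero mean) identifies this lower bound as $\frac{2^{2/n}\kappa_n^{2/n}j^2_{n/2-1,1}}{|\Omega|^{2/n}}$. If this inequality is strict for $\Omega$, then by the already established convergence any sufficiently large $\alpha > 0$ places $\lambda(\alpha,\Omega)$ above the target value. If equality holds, the rigidity part of Theorem \ref{isinth} identifies $\Omega$ as a union of two disjoint Wulff sets of equal measure, and Proposition \ref{eigalp}(c) yields the desired bound (with equality) for every $\alpha \geq 0$. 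Either way, some positive $\alpha$ works.

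The main obstacle is the convergence $\lambda(\alpha,\Omega) \to \lambda^T(\Omega)$ itself: the penalty $\alpha(\int_\Omega u\,\text{d}x)^2$ enforces the zero-mean constraint only asymptotically, and turning this heuristic into the lower bound $\lambda^T(\Omega) \leq \liminf_{\alpha \to +\infty} \lambda(\alpha,\Omega)$ requires the compactness plus lower-semicontinuity argument outlined above. The remaining pieces---the isoperimetric bound, its rigidity, and the explicit value of $\lambda^T$ on a union of two equal Wulff sets---are direct applications of results established earlier in the excerpt.
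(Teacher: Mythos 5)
Your proposal is correct and follows essentially the same route as the paper: bound $\lambda(\alpha,\Omega)$ above by $\lambda^T(\Omega)$, recover the matching lower bound via normalized eigenfunctions, Rellich--Kondrachov compactness and lower semicontinuity, then apply Theorem \ref{isinth} together with Proposition \ref{eigalp}(c) (equivalently Proposition \ref{tworem}(b)) to identify the limiting value. Your explicit case split between strict inequality and the rigidity/equality case is a slightly more careful treatment of a point the paper passes over tersely, but the argument is the same.
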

\begin{proof}
We first observe that $\lambda(\alpha,\Omega)$ is bounded, indeed
\begin{equation*}
\lim_{\alpha\rightarrow+\infty}\lambda(\alpha,\Omega)\leq\min_{\substack{u\in H_0^1(\Omega) \\ u \not\equiv 0}}\left\{\frac{\int_\Omega (H(\nabla u))^2\ \text{d}x}{\int_\Omega u^2\ \text{d}x}, \int_\Omega u \ \text{d}x=0\right\}=\lambda^T(\Omega).
\end{equation*}
Compactness arguments show that there exists a sequence of eigenfunctions $u_\alpha$, $\alpha\rightarrow + \infty$, with norm in $L^2(\Omega)$ equal to $1$, weakly converging in $H_0^1(\Omega)$ and strongly in $L^2(\Omega)$ to a function $u$. Obviously $\int_\Omega u_\alpha\ \text{d}x\rightarrow\int_\Omega u \ \text{d}x=0$, as $\alpha\rightarrow+\infty$ (this limit exists by compactness) and hence, by the lower semicontinuity \cite[Th. 4.5]{G}
\begin{equation*}
\lim_{\alpha\rightarrow+\infty}\lambda(\alpha,\Omega)\geq\inf_{\substack{u\in H_0^1(\Omega) \\ u \not\equiv 0}}\left\{\frac{\int_\Omega (H(\nabla u))^2\ \text{d}x}{\int_\Omega u^2\ \text{d}x}, \int_\Omega u \ \text{d}x=0\right\}=\lambda^T(\Omega).
\end{equation*}
In Theorem \ref{isinth} we have proved that the last term is greater or equal than the first eigenvalue on two disjoint Wulff sets of equal radii. Therefore, by Proposition \ref{eigalp}\emph{(c)}, the result follows.
\end{proof}

\begin{prop}
\label{baldes}
If $\alpha >0$ and $\Omega$ is a bounded, open set in $\R^n$ which is not union of two disjoint Wulff sets. Then there exist $\mathcal{W}_{R_1}$ and $\mathcal{W}_{R_2}$ disjoint such that $|\mathcal{W}_{R_1}\cup \mathcal{W}_{R_2}|=|\Omega|$ and
\begin{equation}
\lambda(\alpha,\Omega)>\lambda(\alpha,\mathcal{W}_{R_1}\cup \mathcal{W}_{R_2})=\min_{\substack{  A=\mathcal{W}_{R_1} \cup  \mathcal{W}_{R_2}\\ |A|=|\Omega|}}\lambda(\alpha,A).
\end{equation}
\end{prop}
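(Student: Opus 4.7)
The plan is to take a first eigenfunction $u$ realizing $\lambda(\alpha,\Omega)$, split it into its positive and negative parts, rearrange each part onto a Wulff set of the same measure, and paste the two rearrangements together on a union of two disjoint Wulff sets to obtain an admissible test function. The anisotropic P\'olya--Szeg\"o principle (Theorem~\ref{psieth}) will bound the Rayleigh quotient of this test function by $\lambda(\alpha,\Omega)$, and the hypothesis that $\Omega$ is not a disjoint union of Wulff sets will upgrade the bound to a strict inequality by invoking the equality case.

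More concretely, setting $\Omega_\pm=\{\pm u>0\}$, I would pick disjoint Wulff sets $\mathcal{W}_{R_1}$, $\mathcal{W}_{R_2}$ with $|\mathcal{W}_{R_1}|=|\Omega_+|$ and $|\mathcal{W}_{R_2}|=|\Omega_-|$ (allowing $R_i=0$ if the corresponding part is empty, in case $u$ does not change sign). Let $u_+^{\#}$ and $u_-^{\#}$ denote the decreasing convex rearrangements of $u_+:=\max(u,0)$ and $u_-:=\max(-u,0)$ onto $\mathcal{W}_{R_1}$ and $\mathcal{W}_{R_2}$ respectively, defined via (\ref{conrea}), and set $v=u_+^{\#}$ on $\mathcal{W}_{R_1}$ and $v=-u_-^{\#}$ on $\mathcal{W}_{R_2}$. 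Convex rearrangement preserves $L^p$ norms and the signed integral of each part, so
\begin{equation*}
\int_\Omega u^2\,\text{d}x=\int_{\mathcal{W}_{R_1}\cup\mathcal{W}_{R_2}}v^2\,\text{d}x,\qquad \int_\Omega u\,\text{d}x=\int_{\mathcal{W}_{R_1}}u_+^{\#}\,\text{d}x-\int_{\mathcal{W}_{R_2}}u_-^{\#}\,\text{d}x=\int_{\mathcal{W}_{R_1}\cup\mathcal{W}_{R_2}}v\,\text{d}x,
\end{equation*}
and the nonlocal term of $\mathscr{Q}_\alpha$ matches exactly. Applying (\ref{poszin}) separately to $u_+$ on $\Omega_+$ and to $u_-$ on $\Omega_-$ dominates the anisotropic Dirichlet integral, so
\begin{equation*}
\lambda(\alpha,\mathcal{W}_{R_1}\cup\mathcal{W}_{R_2})\leq\mathscr{Q}_\alpha(v,\mathcal{W}_{R_1}\cup\mathcal{W}_{R_2})\leq\mathscr{Q}_\alpha(u,\Omega)=\lambda(\alpha,\Omega).
\end{equation*}

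For the strict inequality, I would assume equality throughout and derive a contradiction. Equality in P\'olya--Szeg\"o on both $\Omega_+$ and $\Omega_-$ forces, by the equality clause of Theorem~\ref{psieth}, almost every super-level set of $u_\pm$ to be a Wulff set, and a standard exhaustion argument on the level sets then yields that $\Omega_+$ and $\Omega_-$ themselves are Wulff sets up to null sets. Combined with the fact that the nodal set $\{u=0\}$ has measure zero (by unique continuation applied to the elliptic equation $L_\alpha u=\lambda u$), this would exhibit $\Omega=\Omega_+\cup\Omega_-$ as a disjoint union of Wulff sets, contradicting the hypothesis. Attainment of the minimum among disjoint Wulff unions of the fixed total measure $|\Omega|$ is obtained by parametrizing such pairs via $s:=R_1^n\in[0,|\Omega|/\kappa_n]$ with $R_2^n=|\Omega|/\kappa_n-s$ and noting that $s\mapsto\lambda(\alpha,\mathcal{W}_{R_1}\cup\mathcal{W}_{R_2})$ is continuous on a compact interval. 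The main obstacle I expect is the rigidity step: promoting ``almost every super-level set of $u_\pm$ is a Wulff set'' to ``$\Omega_\pm$ itself is a Wulff set modulo a null set'' while simultaneously controlling the nodal set of $u$; the rearrangement inequality itself is essentially immediate from the quoted P\'olya--Szeg\"o principle, so the technical weight of the argument sits in this measure-theoretic upgrade.
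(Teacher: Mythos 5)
Your rearrangement step is essentially the paper's: split $u$ into $u_\pm$, rearrange onto Wulff sets of measures $|\Omega_\pm|$, note that the $L^2$ norm and the signed average are preserved so the nonlocal term matches, and apply (\ref{poszin}) twice; the equality analysis via Theorem \ref{psieth} to force $\Omega_\pm$ to be Wulff sets is also the same. The genuine gap is in how you dispose of the degenerate case $|\Omega_+|+|\Omega_-|<|\Omega|$. First, this case already contaminates your non-strict inequality: your comparison domain $\mathcal{W}_{R_1}\cup\mathcal{W}_{R_2}$ then has measure $|\Omega_+|+|\Omega_-|<|\Omega|$, so it is not admissible for the minimum in the statement, and you need an extra argument (e.g.\ that $\lambda(\alpha,\cdot)$ does not increase when the two Wulff sets are enlarged to total measure $|\Omega|$, or the paper's scaling step) before you can even write $\lambda(\alpha,\Omega)\geq\min_{|A|=|\Omega|}\lambda(\alpha,A)$. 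Second, your proposed way of excluding this case --- ``the nodal set $\{u=0\}$ has measure zero by unique continuation applied to $L_\alpha u=\lambda u$'' --- does not work: $\Omega$ is not assumed connected (it can be, say, a union of three disjoint Wulff sets, on which a first eigenfunction may vanish identically on a whole component), the operator $\mathrm{div}(H(\nabla u)\nabla H(\nabla u))$ is quasilinear and anisotropic, for which unique continuation is not a standard quotable fact, and the nonlocal term only disappears after one first observes that $u$ vanishing on a positive-measure set forces $\alpha\int_\Omega u=0$ there, hence $\int_\Omega u=0$. So eigenfunctions vanishing on sets of positive measure genuinely can occur and must be handled, not ruled out.

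The paper's proof deals with exactly this case differently: if $|\Omega_+|+|\Omega_-|<|\Omega|$, then by (\ref{eigpro}) the eigenfunction has zero average, so the transplanted function on $\Omega_+^\#\cup\Omega_-^\#$ carries no nonlocal contribution, and dilating the two Wulff sets up to total measure $|\Omega|$ strictly decreases the (now purely Dirichlet) Rayleigh quotient by the strict monotonicity of the Dirichlet eigenvalue under homotheties; this yields $\lambda(\alpha,\Omega_+^\#\cup\Omega_-^\#)>\inf_{|A|=|\Omega|}\lambda(\alpha,A)$ and hence the desired contradiction with assumed equality. You need to replace your unique-continuation step with an argument of this kind (or otherwise strictly compare your undersized pair of Wulff sets with an admissible pair of total measure $|\Omega|$). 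The remaining ingredients of your proposal --- the exhaustion of $\Omega_\pm$ by its super-level sets to upgrade ``a.e.\ level set is a Wulff set'' to ``$\Omega_\pm$ is a Wulff set up to a null set'', and the attainment of the minimum by parametrizing pairs of radii over a compact interval --- are fine and match the paper's use of Theorem \ref{psieth} and its compactness/continuity argument.
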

\begin{proof}
Let $u$ be an eigenfunction of (\ref{infquo}), $\Omega_\pm$ be defined as in (\ref{deo+o-}), $u_\pm=u|_{\Omega\pm}$ and $\Omega_\pm^\#$ be the Wulff sets with the same measure as $\Omega_\pm$. Using (\ref{poszin}), it is easy to show that
\begin{equation}
\label{geqine}
\lambda(\alpha,\Omega)\geq
\min_{\substack{  A=\mathcal{W}_{R_1} \cup  \mathcal{W}_{R_2}\\ |A|=|\Omega|}}\lambda(\alpha,A).
\end{equation}
Indeed, we have
\begin{equation}
\label{pr31in}
\begin{split}
& \lambda (\alpha,\Omega)=\frac{\int_\Omega (H(\nabla u))^2\ \text{d}x+\alpha\left(\int_\Omega u\ \text{d}x\right)^2}{\int_\Omega u^2\ \text{d}x}\\
&\geq \frac{\int_{\Omega_+^\#} (H(\nabla (u_+)^\#))^2\ \text{d}s+\int_{\Omega_-^\#} (H(\nabla (u_-)^\#))^2\ \text{d}s+\alpha\left(\int_{\Omega_+^\#} (u_+)^\# \ \text{d}s- \int_{\Omega_-^\#} (u_-)^\#\ \text{d}s\right)^2}{\int_{\Omega_+^\#} (u_+)^{\substack{\star^2}}\ \text{d}s + \int_{\Omega_-^\#} (u_-)^{\substack{\star^2}}\ \text{d}s}\\
&\geq \min_{(f,g)\in H^0_1(\Omega_+^\#)\times H^0_1(\Omega_+^\#)}\frac{\int_{\Omega_+^\#} (H(\nabla f))^2\ \text{d}s+\int_{\Omega_-^\#} (H(\nabla g))^2\ \text{d}s+\alpha\left(\int_{\Omega_+^\#} f\ \text{d}s - \int_{\Omega_-^\#} g\ \text{d}s\right)^2}{\int_{\Omega_+^\#} f^2 \ \text{d}s+ \int_{\Omega_-^\#} g^2\ \text{d}s}\\
&= \lambda(\alpha,\Omega_+^\#\cup\Omega_-^\#)\\
&\geq \inf_{\substack{  A=\mathcal{W}_{R_1} \cup  \mathcal{W}_{R_2}\\ |A|=|\Omega|}}\lambda(\alpha,A)\\
\end{split}
\end{equation}
Let us prove that, actually, the inequality (\ref{geqine}) is strict. Suppose, by contradiction that (\ref{geqine}) holds as an equality. In particular, from (\ref{pr31in}) we have
\begin{equation}
\lambda(\alpha,\Omega)
= \lambda(\alpha,\Omega_+^\#\cup\Omega_-^\#),
\end{equation}
hence, by Theorem \ref{psieth}, we deduce that $\Omega_+^\#$ and $\Omega_-^\#$ are Wulff sets. Then, we may have two cases:
\begin{itemize}
\item[(i)]$\Omega=\Omega_+^\#\cup\Omega_-^\#$,
\item[(ii)]$|\Omega_+| + |\Omega_-|<|\Omega|$.
\end{itemize}
In the first case, we have immediately a contradiction because, by hypothesis, $\Omega$ is not a union of two Wulff sets.\\
In the second case, we observe that eigenfunction $u$ vanishes on a set of positive measure and, by (\ref{eigpro}), it has zero average. Using the strict monotonicity of the Dirichlet eigenvalue with respect to homotheties, we again reach a contradiction since $\lambda(\alpha,\Omega_+^\#\cup\Omega_-^\#)>\inf_{\substack{  A=\mathcal{W}_{R_1} \cup  \mathcal{W}_{R_2}\\ |A|=|\Omega|}}\lambda(\alpha,A)$. Therefore, we have
\begin{equation}
\lambda(\alpha,\Omega)> \inf_{\substack{  A=\mathcal{W}_{R_1} \cup  \mathcal{W}_{R_2}\\ |A|=|\Omega|}}\lambda(\alpha,A).
\end{equation}
Finally, the compactness of family of disjoint pair of Wulff sets and the continuity of $\lambda (\alpha, \Omega)$ with respect to uniform convergence of the domains gives the existence of the set $A=\mathcal{W}_{R_1} \cup  \mathcal{W}_{R_2}$ (see e.g. \cite{BB}, \cite{HP}) .
\end{proof}

\begin{rem}
\label{decrem}
Before showing the next result, let us observe that when $\Omega$ reduces to the union of two Wulff sets $\mathcal{W}_{R_1}\cup\mathcal{W}_{R_2}$, then problem (\ref{eigpro}) becomes 
\begin{equation}
\label{decpro}
\begin{sistema}
-\text{div} (H(\nabla u) \nabla H(\nabla u))+\alpha\left(\int_{\mathcal{W}_{R_1}} u \ \text{d}x+\int_{\mathcal{W}_{R_2}} v\ \text{d}x\right)=\lambda u \quad\text{in} \ \mathcal{W}_{R_1}\\
-\text{div} (H(\nabla v) \nabla H(\nabla v))+\alpha\left(\int_{\mathcal{W}_{R_1}} u \ \text{d}x+\int_{\mathcal{W}_{R_2}} v\ \text{d}x\right)=\lambda v \quad\text{in} \ \mathcal{W}_{R_2}\\
u=0\quad\text{on}\ \partial \mathcal{W}_{R_1}, \quad 
v=0\quad\text{on}\ \partial \mathcal{W}_{R_2}.
\end{sistema}
\end{equation}
\end{rem}

\begin{prop}
\label{proalp}
Let $\Omega$ be the union of two disjoint Wulff sets $\mathcal{W}_{R_1}$ and $\mathcal{W}_{R_2}$ such that $\kappa_n(R_1^n+R_2^n)=|\Omega|$. Then, for every $\eta\in\left(\frac{\kappa_n^{2/n}j^2_{n/2-1,1}}{|\Omega|^{2/n}},\frac{2^{2/n}\kappa_n^{2/n}j^2_{n/2-1,1}}{|\Omega|^{2/n}}\right)$ and for every $R_1$, $R_2 \geq 0$, there exists a unique value of $\alpha$, denoted by $\alpha_\eta$, given by 
\begin{equation}
\label{alpmuu}
\frac{1}{\alpha_\eta}=\frac{\kappa_n(R_1^n+R_2^n)}{\eta}-\frac{n\kappa_n}{\eta^{3/2}}\left[R_1^{n-1}\frac{J_{n/2}(\sqrt{\eta}R_1)}{J_{n/2-1}(\sqrt{\eta}R_1)}+R_2^{n-1} \frac{J_{n/2}(\sqrt{\eta}R_2)}{J_{n/2-1}(\sqrt{\eta}R_2)}  \right],
\end{equation}
such that
$\eta=\lambda(\alpha_\eta,\mathcal{W}_{R_1}\cup \mathcal{W}_{R_2})$.
\end{prop}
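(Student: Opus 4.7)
The plan is to construct the first eigenfunction of $L_\alpha$ on $\Omega=\mathcal{W}_{R_1}\cup\mathcal{W}_{R_2}$ in closed form, and then read off $\alpha_\eta$ from the self\mbox{-}consistency condition $A=\alpha\int_\Omega u\,\text{d}x$, where $A$ is the common constant source appearing in the decoupled system of Remark~\ref{decrem}. Applying the equality case of Theorem~\ref{psieth} component by component, I may restrict to first eigenfunctions of the form $u_i(x)=u_i(H^o(x-x_i))$, $i=1,2$. Setting $\rho=H^o(x-x_i)$ and proceeding as in \eqref{u*kpro}--\eqref{ujwpro}, each $u_i(\rho)$ satisfies the scalar boundary value problem
\begin{equation*}
u_i''(\rho)+\frac{n-1}{\rho}u_i'(\rho)+\eta\,u_i(\rho)=A,\qquad u_i'(0)=0,\quad u_i(R_i)=0.
\end{equation*}

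The unique solution regular at the origin is
\begin{equation*}
u_i(\rho)=\frac{A}{\eta}\left[1-\frac{R_i^{n/2-1}\rho^{1-n/2}J_{n/2-1}(\sqrt{\eta}\,\rho)}{J_{n/2-1}(\sqrt{\eta}\,R_i)}\right].
\end{equation*}
Combining the anisotropic radial integration formula $\int_{\mathcal{W}_{R_i}}u_i\,\text{d}x=n\kappa_n\int_0^{R_i}u_i(\rho)\rho^{n-1}\,\text{d}\rho$ with the Bessel identity $\int_0^R J_{n/2-1}(k\rho)\rho^{n/2}\,\text{d}\rho=k^{-1}R^{n/2}J_{n/2}(kR)$ (already invoked in the proof of Theorem~\ref{isinth}), a direct computation gives
\begin{equation*}
\int_\Omega u\,\text{d}x=A\left[\frac{\kappa_n(R_1^n+R_2^n)}{\eta}-\frac{n\kappa_n}{\eta^{3/2}}\sum_{i=1,2}R_i^{n-1}\frac{J_{n/2}(\sqrt{\eta}\,R_i)}{J_{n/2-1}(\sqrt{\eta}\,R_i)}\right].
\end{equation*}
Since $\eta$ lies strictly below the saturation value $2^{2/n}\kappa_n^{2/n}j^2_{n/2-1,1}/|\Omega|^{2/n}$, the eigenfunction constructed above cannot have zero average, so $A\neq 0$; substituting $A=\alpha\int_\Omega u\,\text{d}x$ and dividing by $A$ reproduces \eqref{alpmuu} verbatim.

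It remains to verify the existence and uniqueness of $\alpha_\eta\in\R$ with $\lambda(\alpha_\eta,\Omega)=\eta$. By Proposition~\ref{eigalp}(a) the map $\alpha\mapsto\lambda(\alpha,\Omega)$ is continuous and non\mbox{-}decreasing; it tends to $-\infty$ as $\alpha\to-\infty$ (insert any test function with nonzero mean into the Rayleigh quotient) and exceeds $2^{2/n}\kappa_n^{2/n}j^2_{n/2-1,1}/|\Omega|^{2/n}$ for sufficiently large $\alpha$ by Theorem~\ref{twithe}. The intermediate value theorem thus supplies some $\alpha_\eta$ with $\lambda(\alpha_\eta,\Omega)=\eta$. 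Uniqueness follows from Proposition~\ref{anoalp}(a): if $\eta$ were attained as the first eigenvalue for two distinct parameters, then $\eta$ would have to be an eigenvalue of the local problem and therefore of $L_\alpha$ for every $\alpha$, forcing $\lambda(\alpha,\Omega)\leq\eta$ for all $\alpha$, in contradiction with Theorem~\ref{twithe} and our assumption that $\eta$ sits strictly below the saturation value. Applying the construction of the first two paragraphs to the first eigenfunction at this $\alpha_\eta$ identifies it with the value prescribed by \eqref{alpmuu}. I expect the most delicate point to be the identification of the Bessel\mbox{-}type solution with a first (rather than higher) eigenfunction; this is pinned down precisely by the continuity\mbox{-}plus\mbox{-}monotonicity argument above, which matches the branch of first eigenvalues swept out as $\alpha$ varies with the branch traced by the closed\mbox{-}form expression.
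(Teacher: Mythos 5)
Your overall strategy is the paper's: reduce to the decoupled system of Remark \ref{decrem}, write the componentwise radial Bessel solution with constant source $A$, obtain \eqref{alpmuu} from the self-consistency condition $A=\alpha\int_\Omega u\,\text{d}x$, get existence of $\alpha_\eta$ from continuity/monotonicity plus Theorem \ref{twithe}, and uniqueness from Proposition \ref{anoalp}. Your integral computation is correct, and your uniqueness contradiction (a zero-average local eigenfunction makes $\eta$ an eigenvalue of $L_\alpha$ for all $\alpha$, hence $\lambda(\alpha,\Omega)\le\eta$ for every $\alpha$, against Theorem \ref{twithe}) is a slightly cleaner route than the paper's case analysis via Proposition \ref{eigalp}(c) and Proposition \ref{tworem}(a). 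The genuine gap is your opening step: you assume that the first eigenfunction at $\alpha_\eta$ has the form $u_i(x)=u_i(H^o(x-x_i))$ on each component, \lq\lq by the equality case of Theorem \ref{psieth}\rq\rq. That theorem is only applicable after you know equality holds in \eqref{poszin} for the eigenfunction, which you never establish; for $\alpha>0$ the standard rearrangement argument does not run, since componentwise convex rearrangement can \emph{increase} the nonlocal term $\alpha\left(\int_\Omega u\,\text{d}x\right)^2$ if the eigenfunction changes sign on a component, so you cannot conclude that the rearranged function is again a minimizer. Even granting equality, Theorem \ref{psieth} only says the level sets are Wulff sets, not that they are concentric with $\mathcal{W}_{R_i}(x_i)$, which is what your closed form needs; and because the anisotropic operator is nonlinear, you cannot invoke linear uniqueness for the boundary value problem with source $A$ to force radial symmetry either. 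Since the whole derivation of \eqref{alpmuu} rests on this closed form for the \emph{first} eigenfunction, the step is load-bearing.

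The gap closes if you run your computation in the reverse direction, which is exactly how the paper arranges it: for an arbitrary constant $A\neq 0$, your explicit pair solves \eqref{decpro} with eigenvalue $\eta$ precisely when $\alpha$ equals the value given by \eqref{alpmuu} (your self-consistency relation read as a definition of $\alpha$), so $\eta$ is \emph{an} eigenvalue of $L_\alpha$ at that parameter, with no claim whatsoever about the shape of the first eigenfunction. Your intermediate-value argument produces some $\alpha^*$ at which $\eta$ is the \emph{first} eigenvalue, and your own uniqueness argument --- which through Proposition \ref{anoalp}(a) only requires $\eta$ to be an eigenvalue, not necessarily the first one, at two distinct parameters --- forces $\alpha^*$ to coincide with the value in \eqref{alpmuu}. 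With that reordering your proof is complete; the degenerate situation $J_{n/2-1}(\sqrt{\eta}\,R_i)=0$, where the closed form and \eqref{alpmuu} break down, is left untreated by you, but the paper glosses over it as well.
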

\begin{proof}
In view of Proposition \ref{baldes}, problem (\ref{eigpro}) reduces to (\ref{decpro}). Then, we easily verify that the functions
\begin{equation*}
u=R_2^{1-\frac{n}{2}}J_{\frac{n}{2}-1}\left(\sqrt{\eta}R_2\right)\left[\left(H^o(x)\right)^{1-\frac{n}{2}}  J_{\frac{n}{2}-1}\left(\sqrt{\eta} \ H^o(x)\right)  - R_1^{1-\frac{n}{2}}J_{\frac{n}{2}-1}\left(\sqrt{\eta}\ R_1\right)\right] 
\end{equation*}
and
\begin{equation*}
v=R_1^{1-\frac{n}{2}}J_{\frac{n}{2}-1}\left(\sqrt{\eta}R_1\right)\left[\left(H^o(x)\right)^{1-\frac{n}{2}}  J_{\frac{n}{2}-1}\left(\sqrt{\eta} \ H^o(x) \right)  - R_2^{1-\frac{n}{2}}J_{\frac{n}{2}-1}\left(\sqrt{\eta}\ R_2\right) \right].
\end{equation*}
solve problem (\ref{decpro}). Now, we show that, for all $R_1$, $R_2$ and $\eta$ as in the hypothesis, there exists at least one value of $\alpha$ such that problem (\ref{decpro}) admits a non trivial solution. Indeed, the eigenvalue $\lambda(\alpha,\mathcal{W}_{R_1}\cup\mathcal{W}_{R_2})$ is clearly unbounded from below as $\alpha\rightarrow - \infty$ and, by Theorem \ref{twithe}, is larger than $\frac{2^{2/n}\kappa_n^{2/n}j^2_{n/2-1,1}}{|\Omega|^{2/n}}$ as $\alpha\rightarrow + \infty$. Hence the continuity and the monotonicity of $\lambda(\alpha,\mathcal{W}_{R_1}\cup\mathcal{W}_{R_2})$ with respect to $\alpha$ implies that when $\alpha = \alpha_\eta$, then $\eta$ is the first eigenvalue of problem (\ref{decpro}). This value of $\alpha$ is unique, indeed, arguing by contradiction, if for some $\eta$, there exists another value $\alpha\neq\alpha_\eta$ such that $\eta$ is the first eigenvalue of problem (\ref{decpro}), then by Proposition \ref{anoalp}, $\eta$ is also an eigenvalue of the local problem and the corresponding eigenfunction has zero average in $\mathcal{W}_{R_1}\cup\mathcal{W}_{R_2}$. Hence, if these Wulff sets have the same measure, then the eigenvalue $\eta$ is, by Proposition \ref{eigalp}\emph{(c)}, equal to $\frac{2^{2/n}\kappa_n^{2/n}j^2_{n/2-1,1}}{|\Omega|^{2/n}}$ and this contradicts the fact that $\eta<\frac{2^{2/n}\kappa_n^{2/n}j^2_{n/2-1,1}}{|\Omega|^{2/n}}$. Otherwise, if the sets do not have the same measure, by Proposition \ref{tworem}\emph{(a)}, the first eigenfunction is identically zero on the smaller set and it does not change sign on the larger one; this is in contradiction with the fact that the eigenfunction is not trivial and has zero average.
\end{proof}
\noindent \emph{\bf Proof of Theorem \ref{thmsat}.}
Let us firstly analize the case of nonpositive $\alpha$. Let $u$ be an eigenfunction, by (\ref{poszin}) we have
\begin{equation*}
\lambda (\alpha,\Omega)=\mathscr{Q}_\alpha (u,\Omega)\geq\mathscr{Q}_\alpha (|u|,\Omega)\geq \mathscr{Q}_\alpha (u^\#,\Omega^\#)\geq \lambda (\alpha,\Omega^\#).
\end{equation*}
By Proposition \ref{sigthm}, we can say that $u$ is positive; therefore $\Omega$ coincides with the set \mbox{$\{ x\in\Omega : u(x)>0 \}$} that, by Theorem \ref{psieth}, is equivalent to a Wulff set. Therefore the equality case is proved.

Now, we study the case of positive value of $\alpha$. In view of Proposition \ref{baldes} we can restrict our study to the case of two disjoint Wulff sets, of radii $R_1$ and $R_2$, whose union has the same measure of $\Omega$. By Proposition \ref{eigalp}\emph{(b)-(c)}, the first eigenvalue is greater than $\frac{\kappa_n^{2/n}j^2_{n/2-1,1}}{|\Omega|^{2/n}}$ and is lower than the first eigenvalue computed on two Wulff sets with the same measure, that is $\frac{2^{2/n}\kappa_n^{2/n}j^2_{n/2-1,1}}{|\Omega|^{2/n}}$. Hence, we can restrict our study to the eigenvalues in the range $\left( \frac{ \kappa_n^{2/n}j^2_{n/2-1,1}} {|\Omega|^{2/n}},\frac{2^{2/n}\kappa_n^{2/n}j^2_{n/2-1,1}}{|\Omega|^{2/n}}\right)$. Now, let us observe that if $\Omega$ is a Wulff set and $\alpha = \alpha_c|\Omega|^{-1-2/n}$, then, from  (\ref{alpmuu}) with $R_2=0$, $\lambda(\alpha,\Omega^\#)=\frac{2^{2/n}\kappa_n^{2/n}j^2_{n/2-1,1}}{|\Omega|^{2/n}}$. Therefore $\alpha = \alpha_c|\Omega|^{-1-2/n}$ is a critical value of $\alpha$ because the first eigenvalue on $\Omega^\#$ coincides with the first eigenvalue on the union of two disjoint Wulff sets of equal radii.

We firstly analyze the subcritical cases ($0 < \alpha < \alpha_c|\Omega|^{-1-2/n}$). Thanks to Proposition \ref{baldes}, it remains to prove that if $\Omega$ is union of two non negligible disjoint Wulff sets, then $\lambda(\alpha,\Omega)>\lambda(\alpha,\Omega^\#)$. Therefore, by Proposition \ref{proalp}, this is equivalent to say that, for any $\eta\in\left(\frac{\kappa_n^{2/n}j^2_{n/2-1,1}}{|\Omega|^{2/n}},\frac{2^{2/n}\kappa_n^{2/n}j^2_{n/2-1,1}}{|\Omega|^{2/n}}\right)$,  $\alpha_\eta$ attains its maximum if and only if $R_1$ or $R_2$ vanishes, with the constraint $\kappa_n(R_1^n+R_2^n)=|\Omega|$. This is proved in \cite[Prop. 3.4]{BFNT} with $\kappa_n$ instead of $\omega_n$ using Bessel function properties. 

The continuity of $\lambda(\alpha, \Omega)$ with respect to $\alpha$ for subcritical values yields $\lambda(\alpha_c, \Omega) \geq \lambda(\alpha_c, \Omega^\#) $. Hence, for supercritical values ($\alpha>\alpha_c|\Omega|^{-1-2/n}$), using the monotonicity of $\lambda(\alpha,\Omega)$ with respect to $\alpha$, we have 
\begin{equation}
\label{superi}
\lambda(\alpha, \Omega) \geq \lambda(\alpha_c, \Omega) \geq \lambda(\alpha_c, \Omega^\#) = \frac{2^{2/n}\kappa_n^{2/n}j^2_{n/2-1,1}}{|\Omega|^{2/n}}.
\end{equation}

If the inequalities in (\ref{superi}) hold as equalities, then $\Omega$ is the union of two disjoint Wulff sets of same measure. Indeed, by Proposition \ref{eigalp}\emph{(a)}, the first inequality is strict only when the eigenfunction relative to $\lambda(\alpha,\Omega)$ has nonzero average, that is when the two Wulff sets have different radii. Hence also the equality case follows and this conclude the proof of the Theorem \ref{thmsat}.



\begin{thebibliography}{777}

\bibitem{AFLT} A. Alvino, V. Ferone, P.-L. Lions, G. Trombetti, {\em Convex symmetrization and applications}, Annales de l'Institut Henry Poincar\'e, Analyse non lin\'eaire, Vol. 14, no. 2 (1997), pp. 275-293.



\bibitem{BN} G. Bellettini, M. Novaga, {\em Approximation and comparison for non-smooth anisotropic motion by mean curvature in $\R^N$}, Mathematical Models and Methods in Applied Sciences, Vol. 10(01) (2000), pp. 1-10.


\bibitem{BP} G. Bellettini, M. Paolini, {\em Anisotropic motion by mean curvature in the context of Finsler geometry}, Hokkaido Mathematical Journal, Vol. 25, no. 3 (1996), pp. 537-566.

\bibitem{BFK} M. Belloni, V. Ferone, B. Kawohl, {\em Isoperimetric inequalities, Wulff shape and related questions for strongly nonlinear elliptic operators}, Zeitschrift f\"ur Angewandte Mathematik und Physik, Vol. 54 (2003), pp. 771-783.

\bibitem{BK} M. Belloni, B. Kawohl, {\em A direct uniqueness proof for equations involving the $p$-Laplace operator}, Manuscripta mathematica, Vol. 109 (2002), pp. 229-231.

\bibitem{BFNT} B. Brandolini, P. Freitas, C. Nitsch, C. Trombetti, {\em Sharp estimates and saturation phenomena for a nonlocal eigenvalue problem}, Advances in Mathematics, Vol. 228, Iss. 4 (2011), pp. 2352-2365.

\bibitem{B} H. Brezis, \lq\lq{Analisi Funzionale - Teoria e Applicazioni},\rq\rq Liguori Editore, Napoli (1986).

\bibitem{BZ} J. E. Brothers, W. P. Ziemer, {\em Minimal rearrangements of Sobolev functions}, Acta Universitatis Carolinae. Mathematica e Physica, Vol. 28, no. 2 (1987), pp. 13-24.

\bibitem{BB} D. Bucur, G. Buttazzo, \lq\lq {\em Variational Methods in Shape Optimization Problems},\rq\rq Progress in Nonlinear Differential Equations and Applications, Vol. 65, Birkh\"auser, Boston (2005).

\bibitem{CL} L. Caffarelli, F. Lin, {\em Nonlocal heat flows preserving the $L^2$ energy}, Discrete and Continuous Dynamical Systems, Vol. 23 (2009), pp. 49-64.

\bibitem{D} F. Della Pietra, {\em Some remarks on a shape optimization problem}, Kodai Mathematical Journal, Vol. 37 (2014), pp. 608-619.

\bibitem{DGI} F. Della Pietra, N. Gavitone, {\em Sharp bounds for the first eigenvalue and the torsional rigidity related to some anisotropic operators}, Mathematische Nachrichten, Vol. 287, no.2-3 (2014), pp. 194-209.

\bibitem{DGII} F. Della Pietra, N. Gavitone, {\em Faber-Krahn Inequality for Anisotropic Eigenvalue Problems with Robin Boundary Conditions}, Potential Analysis, Vol. 41, Iss. 4 (2014), pp. 1147-1166.

\bibitem{ET} L. Esposito, C. Trombetti, {\em Convex symmetrization and P\'olya-Szeg\"o inequality}, Nonlinear Analysis, Vol. 56 (2004), pp. 43-62.

\bibitem{FV} A. Ferone, R. Volpicelli, {\em Convex rearrangement: Equality cases in the P\'olya-Szeg\"o Inequality}, Calculus of Variation, Vol. 21 (2004), pp. 259-272.

\bibitem{FI} P. Freitas, {\em Nonlocal reaction-diffusion equations}, Fields Institute Communications, Differential Equations with Applications to Biology (1997), pp. 187-204.

\bibitem{FII} P. Freitas, {\em On minimal eigenvalues of Schr\"oedinger operators on manifolds}, Communication in Mathematical Physics, Vol. 217 (2001), pp. 375-382.

\bibitem{FH} P. Freitas, A. Henrot, {\em On the First Twisted Dirichlet Eigenvalue}, Communications in Analysis and Geometry, Vol. 12, no. 5 (2004), pp.1083-1103.

\bibitem{G} E. Giusti, \lq\lq{Direct methods in the calculus of variations},\rq\rq World Scientific, Singapore (2003).

\bibitem{HP} A. Henrot, M. Pierre, \lq\lq{Variation and Optimization de formes},\rq\rq Math\'ematiques et Applications, Vol. 48, Springer, Berlin (2005).

\bibitem{K} B. Kawohl, \lq\lq{Rearrangements and Convexity of Level Sets in PDE},\rq\rq 
Lecture Notes in Mathematics, Vol. 1150, Springer-Verlag, Berlin (1980).




\bibitem{PI} R. Pinsky, {\em Spectral analysis of a class of non-local elliptic operators related to Brownian motion with random jumps}, Transaction of the American Mathematical Society, Vol. 361 (2009), pp. 5041-5060.

\bibitem{PII} G. Piscitelli, {\em Convex Symmetrization for anisotropic elliptic equations with a lower order term}, Rendiconto dell'Accademia di Scienze Fisiche e Matematiche (2015), to appear.

\bibitem{S} R. P. Sperb, {\em On an eigenvalue problem arising in chemistry}, Zeitschrift f\"ur Angewandte Mathematik und Physik 32 (4), Vol. 129 (1981), pp. 265-280.

\bibitem{T} N. S. Trudinger, {\em On Harnack type inequalities and their application to quasilinear elliptic equations}, Communications on Pure and Applied Mathematics, Vol. 20 (1967), pp. 721-747.


\bibitem{W} G. N. Watson, \lq\lq{A treatise on the theory of Bessel functions},\rq\rq Cambridge University Press, Cambridge (1995). 

\end{thebibliography}
\end{document}